\documentclass[12pt,a4paper,twoside]{article}

\usepackage{amsmath, amssymb}
\usepackage{ascmac}

\DeclareMathOperator{\Pic}{Pic}

\DeclareMathOperator{\rk}{rk}
\begin{document}

\title{\textbf{\Large{Second gonality of smooth aCM curves on quartic surfaces in $\mathbb{P}^3$}}}

\author{Kenta Watanabe \thanks{Nihon University, College of Science and Technology,   7-24-1 Narashinodai Funabashi city Chiba 274-8501 Japan , {\textit E-mail address:watanabe.kenta@nihon-u.ac.jp}}}

\date{}

\maketitle 

\begin{abstract}

\noindent For a smooth irreducible curve $C$, its second gonality $d_2$ is defined to be the minimum 
integer $d$ such that $C$ admits a linear series $g_d^2$. In this paper, we compute the second gonality of a smooth aCM curve $C$ lying on a smooth quartic surface in $\mathbb{P}^3$, whose Clifford index is computed by a net on $C$.
 
\end{abstract}

\noindent {\textbf{Keywords}} K3 surface, Brill-Noether theory, LM bundle, Clifford index, second gonality

\smallskip

\noindent {\textbf{Mathematics Subject Classification }}14J28, 14J60, 14H60

\section{Introduction}

Let $C$ be a smooth projective curve of genus $g \geq 4$, and let $d_r$ be the minimum degree of a linear series of dimension $r$ on $C$. We call $d_r$ the $r${\textit{-th gonality}} of $C$, and call the sequence $\{d_r\}_{r \in \mathbb{N}}$ the {\textit{gonality sequence}} of $C$. In particular, $d_1$ corresponds to the gonality of $C$ in the classical sense. For $r \geq g$, $d_r$ is determined by the Riemann-Roch theorem. Furthermore, the gonality sequences of several types of curves with small gonality have been computed by H. Lange, G. Martens, and P. E. Newstead ([3], [4]).

For a base-point-free line bundle $L$ of degree $d_r$ on $C$, given a subspace $V \subset H^0(L)$ that forms an $r$-dimensional base-point-free sub-linear system of $|L|$, let $M_{V,L}$ be the vector bundle on $C$ which is defined as the dual of the kernel of the evaluation map $ev : V \otimes \mathcal{O}_C \longrightarrow L$. If $C$ is a Petri general curve, stable bundles on $C$ are obtained as extensions of vector bundles of type $M_{V,L}$. Consequently, in the context of the Mercat conjecture in higher-rank Brill-Noether theory, the problem concerning the stability of vector bundles on a Petri general curve often reduces to investigating the stability of such bundles ([3]). Since the slope of the vector bundle $M_{V,L}$ is given by $d_r/r$, whether $M_{V,L}$ is stable reduces to examining the inequalities among the slopes associated with the filtration consisting of sub-bundles of $M_{V,L}$. If $d_r$ is the $r$-th gonality of $C$, it is generally expected that the inequality
$$\frac{d_{r-1}}{r-1} \geq \frac{d_{r}}{r}$$
holds for each $r \geq 2$. This inequality is referred to as the slope inequality. The slope inequality is satisfied if $d_r$ does not increase too rapidly with respect to $r$. Accordingly, Lange and Martens carefully examined such cases and demonstrated that the slope inequality fails for general curves in $\mathbb{P}^r$, curves on general K3 surfaces in $\mathbb{P}^r$, and complete intersection curves in $\mathbb{P}^3$. Specifically, Lange and Martens showed that if there exists a linear system $g_d^r$ providing an embedding of degree $d$ into $\mathbb{P}^r$ such that $d_{r-1} = d-1$, then its Serre dual linear system $|K_C - g_d^r|$ causes the collapse of the slope inequality ([4, Lemma 4.8]). All counterexamples provided by Lange and Martens are projectively normal curves. Therefore, it is of general interest to classify projectively normal curves of degree $d$ satisfying $d_{r-1} = d-1$. The purpose of this paper is to compute the second gonality of smooth aCM curves in $\mathbb{P}^3$ by utilizing the classification of aCM line bundles on smooth quartic surfaces in $\mathbb{P}^3$ [10, Theorem 1.1]. Our main result is the following.

\newtheorem{thm}{Theorem}[section]

\begin{thm} Let $X$ be a smooth quartic surface in $\mathbb{P}^3$, and let $H$ be the line bundle on $X$ associated with a hyperplane section of $X$. Let $C\subset X$ be a smooth aCM curve of genus $g\geq9$ and degree $d$ with respect to $H|_C$. If there exists a linear series $g_{d_2}^2$ on $C$ with $g>2d_2-2$ which computes the Clifford index of $C$, then $d_2=d-1$. \end{thm}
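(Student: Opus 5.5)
The plan is to use Lazarsfeld--Mukai (LM) bundles together with the classification of aCM line bundles on $X$ from [10, Theorem 1.1].

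\textbf{Upper bound.} First observe that $d_2\le d-1$ always holds. Indeed, $H|_C$ is very ample of degree $d$ and $h^0(H|_C)=4$; the latter follows from the aCM condition, since $H^1(\mathcal{O}_X(H-C))=0$ and $h^0(\mathcal{O}_X(H-C))=0$ for $C\neq$ a plane curve. Projecting from a point $p\in C$ gives a $g^2_{d-1}$. It remains to show $d_2\ge d-1$.

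\textbf{Lower bound via the LM bundle.} Let $A$ be the line bundle of the $g^2_{d_2}$ computing $\mathrm{Cliff}(C)=d_2-4$. Because it computes the Clifford index and $g>2d_2-2$, the series is complete and base point free, so $h^0(A)=3$. The hypothesis $g>2d_2-2$ gives $\rho(g,2,d_2)<0$, and $h^1(A)\ge 2$. Form the LM bundle $E=E_{C,A}$ of rank $3$ on $X$, with
\[
0\to H^0(A)^\vee\otimes\mathcal{O}_X\to E\to \omega_C\otimes A^{-1}\to 0 ,
\]
so that $c_1(E)=C$, $c_2(E)=d_2$ and $h^0(E)\ge 3+h^1(A)$. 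Then
\[
\chi(E\otimes E^\vee)=2-2\rho<0 .
\]
Hence $E$ is not simple and therefore not stable. By the standard Donagi--Morrison/Lelli-Chiesa type argument, using a destabilizing nilpotent endomorphism, one obtains an effective decomposition $C\sim M+N$ on $X$. Here $M$ is base point free with $h^0(M),h^0(N)\ge 2$, and $M|_C$ contributes to the Clifford index with
\[
\mathrm{Cliff}(M|_C)\le \mathrm{Cliff}(A)=d_2-4 .
\]
Concretely, I would aim to show $\deg(M|_C)=M.C\le d_2+\text{(small)}$, and that $M$ induces a $g^2$ on $C$. This forces the Clifford index to be computed by the restriction of a line bundle on $X$, with $d_2-4=M.C-2h^0(M)+2$.

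\textbf{Case analysis.} The key step is to show that $M$ must be $H$, or $C-H$, which restricts to $K_C-H|_C$ and is of the same type. Here I would invoke [10, Theorem 1.1]: on a quartic, aCM line bundles $C$ have restricted shapes, namely $h^1(\mathcal{O}_X(C-kH))=0$ for all $k$. This rules out decompositions $C=M+N$ with $M$ an elliptic pencil or with $M$ of small degree other than $H$. For instance, an elliptic curve $E$ with $E.C$ small would make $h^1(C-H)\neq0$ or violate the inequalities in the classification.

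For $M=H$ (after subtracting a point or a line component), the bound $\mathrm{Cliff}\le d_2-4$ combined with $\mathrm{Cliff}(H|_C)=d-6$ gives $d_2\ge d-2$. One must then exclude $d_2=d-2$: that case would require a $g^2_{d-2}$, i.e.\ projection from a bisecant line, which needs a line or conic on $X$ meeting $C$ appropriately. Such configurations are excluded by the genus bound $g\ge 9$, together with the aCM vanishing, using Castelnuovo-type counting of $g$ versus $d$.

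\textbf{Main obstacle.} I expect the hardest part to be the case-by-case exclusion of the decompositions $C=M+N$ allowed by the classification. In particular, ruling out $M$ equal to $H$ minus a line (a twisted cubic pencil/net) and elliptic or hyperelliptic configurations that could give $d_2=d-2$ or smaller. I would first handle these by computing $M^2$, $M.C$ and $h^0$, and comparing with the Clifford bound and $g>2d_2-2$.
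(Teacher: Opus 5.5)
\textbf{There is a genuine gap in your argument, at the step meant to produce the lower bound.} Comparing Clifford indices cannot give $d_2\ge d-1$. Write $C\sim sH+D$ as in Remark 2.2. Then $h^0(H|_C)=4$ and $h^1(H|_C)=h^0(\mathcal{O}_X((s-1)H+D))\ge 2$ for $g\ge 9$, so $H|_C$ (or its Serre dual) contributes to $\gamma_C$ with $\gamma(H|_C)=d-6$. Hence $d_2-4=\gamma_C\le d-6$, i.e.\ $d_2\le d-2$.

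Your sentence ``gives $d_2\ge d-2$'' reverses this inequality. Moreover, proving that $H|_C$ computes $\gamma_C$ would give exactly $d_2=d-2$, which is the opposite of the claim. So ``excluding $d_2=d-2$'' is not a leftover case: since $d_2\le d-2$ is automatic under the hypotheses, it is equivalent to showing the hypotheses can never be met. The mechanism you propose for it also fails. Every space curve has chords, and projecting from a chord gives a pencil $g^1_{d-2}$, not a net. By very ampleness no $g^2_{d-2}$ lies in $|H|_C|$ at all, so secant lines say nothing about such nets.

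The missing idea is a \emph{containment of line bundles}, not an equality of numerical invariants. The paper proceeds as follows.
\begin{itemize}
\item It takes a general pencil $V\subset H^0(A)$. The hypothesis $g>2d_2-2$ is exactly $\rho(g,1,d_2)<0$, so the rank-two bundle $E_{C,(A,V)}$ has the Donagi--Morrison structure $M,N$ with $N$ base point free.
\item Since $h^0(M^\vee)=0$, a nonzero section of $E\otimes M^\vee$ injects into $H^0(N|_C\otimes A^\vee)$, which gives $A\subseteq N|_C$.
\item The case analysis over Proposition 2.1 then shows $H-N$ is effective, so $A\subseteq H|_C$.
\item Very ampleness of $H|_C$ (any two points impose two conditions) then forces $\deg(H|_C-A)\le 1$, i.e.\ $d_2\ge d-1$.
\end{itemize}

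\textbf{The supporting steps also have problems.}
\begin{itemize}
\item Your rank-three route uses a Donagi--Morrison statement for nets as a black box. The known results there, due to Lelli-Chiesa, carry extra hypotheses you do not check. The output you extract from it also says nothing about $A$ being contained in a restricted line bundle, which is what is actually needed.
\item $\chi(E\otimes E^\vee)=2-2\rho$ is $>2$, not $<0$. Non-simplicity still follows, so this is only a sign slip.
\item Your claim that aCM vanishing rules out elliptic pencils is false. A plane cubic $F$ residual to a line on $X$ is aCM ($F^2=0$, $H.F=3$), so $C\sim sH+F$ is aCM. Yet $\gamma(F|_C)\le 3s-2<4s-3=\gamma(H|_C)$ for $s\ge 2$, so $H|_C$ does not compute $\gamma_C$ there.
\end{itemize}
The paper never needs $N=H$ either. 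A plane-cubic pencil $N$ with $H-N$ a line is allowed. The range $N.H\ge 5$ is excluded using $M.N\le d_2\le d-1$ (inequality (5.5)), the Hodge index theorem and Lemma 5.1. That is, it uses the net hypothesis through the LM bundle, not aCM-ness alone.
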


\smallskip

\noindent By the definition of the second gonality of a curve $C$, any linear series of dimension two which computes the Clifford index of $C$ attains the second gonality of $C$. Hence, the second gonality of any smooth aCM curve of degree $d$ satisfying the hypothesis of Theorem 1.1 is $d-1$. 

Our plan of this paper is as follows. In section 2, we recall the definition of the gonality sequence of a curve, its properties, and the result concerning the classification of aCM line bundles on a smooth quartic surface in $\mathbb{P}^3$. In section 3, we recall some fundamental facts concerning vector bundles on K3 surfaces. In section 4, we recall the definition of the Lazarsfeld-Mukai bundle associated with a smooth curve on a K3 surface and a base-point-free linear series on it, and several known results about it. In section 5, we will give a proof of Theorem 1.1, and also give an example of a smooth aCM curve of genus 9 and degree $d$ in $\mathbb{P}^3$ whose second gonality $d_2$ satisfies $d_2<d-1$.

\smallskip

\smallskip

\noindent{\textbf{Notations and conventions}}. Throughout this paper, we work over the field of complex numbers $\mathbb{C}$. Unless otherwise stated, all curves and surfaces are assumed to be smooth and projective. For a variety $X$, we denote its canonical line bundle by $K_X$. For divisors $D_1$ and $D_2$ on $X$, linear equivalence is denoted by $D_1 \sim D_2$. The linear system associated with a divisor or a line bundle $L$ is denoted by $|L|$. A linear series of dimension one is referred to as a pencil, and a linear series of dimension two is referred to as a net. For a torsion-free sheaf $E$ on $X$, we denote its rank, its dual, and its $i$-th Chern class by $\rk(E)$, $E^{\vee}$, and $c_i(E)$, respectively.

For a surface $X$, we denote its Picard group by $\text{Pic}(X)$ and its rank by $\rho$. We call it  the Picard number of $X$. The Hodge index theorem states that the intersection form on $\text{Pic}(X)$ has signature $(1, \rho-1)$. As a consequence, the inequality $(D_1 \cdot D_2)^2 \geq D_1^2 D_2^2$ holds for any two divisors $D_1, D_2$ on $X$ with $D_1^2 > 0$ and $D_2^2 > 0$. For a line bundle $L$ and a divisor $D$ on $X$, we write $L(D) := L \otimes \mathcal{O}_X(D)$. Frequently, we identify line bundles with their associated divisors when no confusion arises. The group operation in $\text{Pic}(X)$ is written additively, and the dual of a line bundle $L$ is denoted by $-L$. The restriction of $L$ to a curve $C \subset X$ is denoted by $L|_C$. For a subscheme $W \subseteq X$, its ideal sheaf is denoted by $\mathcal{J}_W$. We say that $X$ is a regular surface if $h^1(\mathcal{O}_X) = 0$. In particular, a regular surface with a trivial canonical bundle $K_X$ is called a K3 surface.

\section{Preliminaries}

In this section, we recall the definitions of the gonality sequence and the Clifford index of a curve, and the properties of them. Moreover, we mention the relationship between the notions of aCM curves and extremal curves in a projective space.

Let $C$ be a smooth irreducible curve of genus $g$. For $r\in\mathbb{N}$, the {\textit{r-th gonality}} of $C$ is defined as follows:
$$d_r:=\min\{d\;|\;C\text{ admits a linear series }g_d^r\}.$$
In particular, $d_1$ is the gonality of $X$. The sequence $\{d_r\}_{r\in\mathbb{N}}$ is called the {\textit{gonality sequence}} of $C$. For a linear series $g_d^r$ on $C$, we denote its {\textit{Clifford index}} by $\gamma(g_d^r):=d-2r$. If there exists a line bundle $L$ on $C$ with $g_d^r=|L|$, (that is, $g_d^r$ is complete), then $\gamma(L)=\gamma(K_C\otimes L^{\vee})$. Hence, by the Riemann-Roch theorem and the Serre duality, without loss of generality, we may assume that $d\leq g-1$. By extending this criterion to linear series on $C$, the Clifford index of $C$ is defined as the minimum value of the Clifford indices of linear series $g_d^r$ with $r\geq1$ and $d\leq g-1$, and we often denote it by $\gamma_C$. Moreover, we say that a linear series $g_d^r$ {\textit{contributes to $\gamma_C$}} if $r\geq1$, and $d\leq g-1$. 

$\;$

\noindent{\textbf{Remark 2.1}}. We can easily see that if $g_d^r$ computes $\gamma_C$, then it is complete and base-point-free, and $d=d_r$.

$\;$

Assume that $C$ is embedded into $\mathbb{P}^r$. Then we call $C$ an {\textit{arithmetically Cohen-Macaulay}} ({\textit{aCM}} for short) curve if $h^1(\mathcal{I}_C(l))=0$, for all $l\in\mathbb{Z}$, where $\mathcal{I}_C$ is the ideal of $C$ in $\mathbb{P}^r$. If $C$ is contained in a smooth surface $X$ of degree $m$ in $\mathbb{P}^3$, then, by the exact sequence
$$0\longrightarrow\mathcal{O}_{\mathbb{P}^3}(-m)\longrightarrow\mathcal{I}_C\longrightarrow\mathcal{O}_X(-C)\longrightarrow0,$$
$\mathcal{O}_X(C)$ is aCM if and only if $C$ is aCM. In particular, we may obtain the characterization of all aCM curves on a smooth quartic surface in $\mathbb{P}^3$, by the following proposition.

\newtheorem{prop}{Proposition}[section]

\begin{prop} {\rm{([10, Theorem 1.1])}} Let $X$ be a smooth quartic surface in $\mathbb{P}^3$, let $H$ be a hyperplane section of $X$, and let $D$ be a divisor on $X$. Then the following conditions are equivalent.

\smallskip

\smallskip

\noindent {\rm{(i)}} $\mathcal{O}_X(D)$ is aCM and initialized.

\noindent {\rm{(ii)}} One of the following cases occurs.

\smallskip

\smallskip

{\rm{(a)}} $D^2=-2$ and $H.D\in\{1,2,3\}$.

{\rm{(b)}} $D^2=0$ and $H.D\in\{0,3,4\}$.

{\rm{(c)}} $D^2=2$ and $H.D=5$.

{\rm{(d)}} $D^2=4,\;H.D=6$, and $|D-H|=|2H-D|=\emptyset.$\end{prop}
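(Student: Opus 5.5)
The plan is to prove the two implications separately. Throughout I use Riemann--Roch on the K3 surface $X$, $\chi(\mathcal{O}_X(F))=F^2/2+2$, together with Serre duality $h^2(\mathcal{O}_X(F))=h^0(\mathcal{O}_X(-F))$. Recall the conventions: $\mathcal{O}_X(D)$ is aCM if $h^1(\mathcal{O}_X(D+lH))=0$ for all $l\in\mathbb{Z}$, and it is initialized if $h^0(\mathcal{O}_X(D))\neq0$ and $h^0(\mathcal{O}_X(D-H))=0$.

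For (i)$\Rightarrow$(ii), I first extract numerical constraints. Since $h^0(D-H)=0$ and $h^1(D-H)=0$, we get $h^0(H-D)=h^2(D-H)=\chi(D-H)=(D-H)^2/2+2$. The same argument applies to $D-2H$, because $h^0(D-2H)=0$ as well:
$$h^0(2H-D)=\frac{D^2-4H.D+16}{2}+2\geq0,\qquad h^0(H-D)=\frac{D^2-2H.D+4}{2}+2\geq0.$$
Also $D$ is effective and nonzero, since $D=0$ violates $h^1=0$ at $l=0$ only if we misread; in fact $D=0$ gives $h^1(\mathcal{O}_X)=0$, so I will treat it separately or absorb it into the convention. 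Hence $H.D>0$. Moreover $h^1(D)=0$ forces $D^2\geq-2$, because $\chi(D)\geq h^0(D)\geq1$ together with $h^2(D)=h^0(-D)=0$.

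Next I combine these with the Hodge index inequality $(H.D)^2\geq 4D^2$. Writing $D^2=2k$ with $k\geq-1$, the first inequality bounds $H.D$ from above in terms of $k$, and Hodge index bounds it from below. This should leave only finitely many pairs $(D^2,H.D)$, roughly $k\leq2$ with $H.D\leq 6$.

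I then eliminate the extra pairs. The intended list is (a)--(d); among the surviving numerical candidates, the ones to be removed are $D^2=-2$ with $H.D\geq4$, $D^2=0$ with $H.D\in\{1,2,5,\dots\}$, and $D^2=2$ with $H.D\neq5$. For each I will produce a nonzero $h^1(D+lH)$ for a suitable $l$, usually $l=-1,0,1$. The tools are the restriction sequence
$$0\to\mathcal{O}_X(D-H)\to\mathcal{O}_X(D)\to\mathcal{O}_H(D)\to0$$
to a smooth plane quartic $H$ of genus $3$, Riemann--Roch on that curve, and the fact that an effective divisor $D$ with $h^1(D)=0$ must be $1$-connected. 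For instance, $D^2=0$ with $H.D=1$ or $2$ would make $D$ a line or conic of self-intersection $0$, which is impossible by adjunction. Case (d) needs the extra conditions $|D-H|=|2H-D|=\emptyset$: if $D-H$ is effective the initialized condition fails, and if $2H-D$ is effective then $D':=2H-D$ has $D'^2=4$ and $H.D'=2$, contradicting Hodge index. So this part is largely a matter of matching the exceptional cases.

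For (ii)$\Rightarrow$(i), in each case I verify initializedness and the vanishing $h^1(D+lH)=0$ for all $l$.

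For $l\ll0$ and $l\gg0$ the vanishing follows from Kodaira/Ramanujam-type vanishing on the K3 surface, once I know that $D+lH$ is big and nef, or that $-(D+lH)$ is.

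For the finitely many middle values of $l$, I use Riemann--Roch. Since $h^1=\chi-h^0-h^2$, it suffices to compute $h^0(D+lH)$ and $h^0(-D-lH)$ exactly. These computations use the effective-divisor descriptions available in each case:
\begin{itemize}
\item in (a), a smooth rational curve of degree $1$--$3$;
\item in (b), an elliptic pencil ($H.D=3,4$) or $D=0$;
\item in (c) and (d), curves of genus $2$ and $3$.
\end{itemize}
In each case I compare with $h^0$ of the restriction to $H$ via the sequence above.

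The main obstacle I expect is the case analysis in (ii)$\Rightarrow$(i) for $D^2=2$ and $D^2=4$. There I must show that $|D|$ contains a reduced, $1$-connected member, so that $h^1(D)=0$, and that multiplication by $H$ behaves well. I would first try to prove that $D$ is nef using $|D-H|=\emptyset$ and Hodge index, and then apply Saint-Donat's results on linear systems on K3 surfaces.
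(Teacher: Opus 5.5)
The paper gives no proof of this statement (it is quoted from [10]), so your plan has to stand on its own. As written, the finiteness step of (i)$\Rightarrow$(ii) fails. Your two displayed formulas give only $H.D\le D^2/2+4$ and $H.D\le D^2/4+5$, while the Hodge index theorem gives $H.D\ge 2\sqrt{D^2}$. An upper bound linear in $D^2$ against a lower bound of order $\sqrt{D^2}$ leaves infinitely many pairs. For example, $(D^2,H.D)=(20,9)$ passes every test you list: your formulas would give $h^0(H-D)=5$ and $h^0(2H-D)=2$, and $81\ge 80$.

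The missing input is the ampleness of $H$. If $H.D\ge 5$, then $H.(H-D)<0$, so $h^0(H-D)=0$. Your formula for $h^0(H-D)$ then becomes the equation $D^2=2H.D-8$. Substituting this into your formula for $h^0(2H-D)$ gives $h^0(2H-D)=6-H.D\ge 0$, so $(D^2,H.D)\in\{(2,5),(4,6)\}$. If $H.D\le 4$, Hodge index gives $D^2\le 4$. The pairs that then really have to be excluded are $(0,1),(0,2),(2,3),(2,4),(4,4)$; the last is $D=H$, which is not initialized. The pairs you single out ($D^2=-2$ with $H.D\ge 4$, and $D^2=0$ with $H.D\ge 5$) are already ruled out by your own formula for $h^0(H-D)$.

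Your argument for the extra conditions in (d) is also wrong. One has $(2H-D)^2=16-24+4=-4$, not $4$, so Hodge index gives no contradiction, and effective classes with these invariants exist (two disjoint lines). In (i)$\Rightarrow$(ii) the vanishing comes directly from your formula: $h^0(2H-D)=\chi(\mathcal{O}_X(D-2H))=0$.

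In (ii)$\Rightarrow$(i), by contrast, $|2H-D|=\emptyset$ is an indispensable hypothesis, because there $h^1(\mathcal{O}_X(D-2H))=h^0(\mathcal{O}_X(2H-D))$. Take $D\sim 2H-L_1-L_2$ with $L_1\cap L_2=\emptyset$. It satisfies $D^2=4$, $H.D=6$ and $|D-H|=\emptyset$, but $h^1(\mathcal{O}_X(D-2H))=1$.

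Finally, the effective models you plan to use for the $h^0$ computations in the middle range of $l$ are too narrow.
In (a), $D$ can be a reducible conic, a line plus a conic, or a chain of three lines, not just a smooth rational curve.
In (b) with $H.D=4$, $|D|$ need not be an elliptic pencil. Take $D\sim H+L-L'$ with disjoint lines $L,L'$. Then $D^2=0$, $H.D=4$, and $\mathcal{O}_X(D)$ is aCM and initialized, yet $D.L=-1$, so $L$ is a fixed component of $|D|$.

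Your vanishing arguments therefore have to handle non-nef classes. In (c) and (d) you must actually prove that $D$ is nef. In (d) you also need $3H-D$ nef to treat $l=-3$; this follows because $3H-D$ again satisfies the conditions of (d).
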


\noindent{\textbf{Remark 2.2}}. If $C$ is an aCM curve on a smooth quartic surface $X$ in $\mathbb{P}^3$, then there exist an effective divisor $D$ satisfying the condition (ii) as in Proposition 2.1, and $s\in\mathbb{N}\cup\{0\}$ such that $C\sim sH+D$.

\smallskip

\smallskip

If $g\geq1$ and $g_d^r$ is a very ample linear system on $C$, then $d\geq r+1$, and, by the Castelnuovo's genus bound, we get the following inequality.
$$g\leq k\biggl(\frac{k-1}{2}(r-1)+\epsilon \biggr),\quad (2.1)$$
where $d-1=k(r-1)+\epsilon$ and $0\leq\epsilon<r-1$. If the upper bound of the inequality (2.1) is attained, then $C\subset\mathbb{P}^r$ is called an {\textit{extremal curve}}. Any extremal curve treated in [4] is aCM. However, a smooth aCM curve is not necessarily extremal. For instance, any smooth aCM curve $D$ satisfying the condition (ii) (d) as in Proposition 2.1 is not extremal.

\section{Vector bundles on K3 surfaces}

In this section, we summarize essential properties of line bundles and curves on K3 surfaces, which will be fundamental to our study of the gonality sequence of curves. First of all, we recall the basic numerical invariants. For any vector bundle $E$ on a K3 surface $X$, the Euler number $\chi(E)$ is given by the Riemann-Roch formula:

$$\chi(E)=2\rk(E)+\frac{c_1(E)^2}{2}-c_2(E),$$
where $\chi(E)=h^0(E)-h^1(E)+h^2(E)$. By the Serre duality, $h^i(E) = h^{2-i}(E^{\vee})$ holds for $0 \leq i \leq 2$. In the case of a divisor $D$ on $X$, this implies $h^0(\mathcal{O}_X(D)) + h^0(\mathcal{O}_X(-D)) \geq \chi(\mathcal{O}_X(D)) = 2+D^2/2$. Consequently, any divisor satisfying $D^2 \geq -2$ is either effective or its inverse is effective. 

$\;$

\noindent {\textbf{Remark 3.1}}. If $D$ is a non-zero effective divisor on $X$, its arithmetic genus $P_a(D)$ is related to its self-intersection by the adjunction formula $D^2 = 2P_a(D) - 2$.

$\;$

\noindent The following results, primarily due to Saint-Donat [9, Proposition 2.6], characterize the behavior of linear systems on K3 surfaces.

\begin{prop}.  Let $L$ be a non-trivial line bundle on $X$ whose linear system  $|L|$ is not empty, and has no fixed component. Then:

\smallskip

\smallskip

\noindent {\rm{(i)}} If $L^2>0$, then the general member of $|L|$ is a smooth irreducible curve of genus $1+L^2/2$, and $h^1(L)=0$.

\noindent {\rm{(ii)}} If $L^2 = 0$, then $L \cong \mathcal{O}_X(kF)$ for some elliptic curve $F$ and $k\in\mathbb{N}$, with $h^1(L) = k - 1$. \end{prop}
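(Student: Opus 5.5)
The plan is to follow Saint-Donat's original argument [9]. We first show that a line bundle $L$ with $|L|\neq\emptyset$ and no fixed component is base-point-free. Everything else then follows from Bertini's theorem, Stein factorization, and cohomology computations based on the Riemann--Roch formula of Section 3 together with $h^1(\mathcal{O}_X)=0$ and $K_X\cong\mathcal{O}_X$.

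\textbf{Base-point-freeness.} Let $D\in|L|$ be a general member. Since $|L|$ has no fixed component, $D$ is nef, and so $L^2\geq 0$. Suppose $L^2=0$. If two members of $|L|$ had a common point, they would meet in a point with no common component, giving $L^2>0$, a contradiction. Hence $|L|$ is base-point-free.

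Suppose $L^2>0$. Here the argument is harder, and I expect it to be the main obstacle. I would use Saint-Donat's method. Take a general member $D$ and restrict to it via
$$0\to\mathcal{O}_X\to L\to L|_D\to0.$$
By adjunction on the K3 surface, $L|_D\cong\omega_D$. Since $h^1(\mathcal{O}_X)=0$, the map $H^0(L)\to H^0(\omega_D)$ is surjective. So $|L|$ restricts to the complete canonical system of $D$, which is base-point-free whenever $D$ is, say, an irreducible curve of arithmetic genus $\geq1$ (more generally a $2$-connected curve). Consequently any base point of $|L|$ on $D$ would be a base point of $|\omega_D|$, and we conclude that $|L|$ has no base points.

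To make the restriction argument work, one first needs that $D$ is irreducible or at least $2$-connected. This uses the absence of fixed components: if the morphism $\varphi_L$ had a $1$-dimensional image, every member would be a sum of fibres and $L^2=0$.

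\textbf{Part (i).} Once $|L|$ is base-point-free with $L^2>0$, the image of $\varphi_L$ is a surface, because a $1$-dimensional image forces $L^2=0$ as above. Bertini's theorem then shows that the general member $C$ is smooth and irreducible. Its genus is $1+L^2/2$ by Remark 3.1.

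For the vanishing $h^1(L)=0$, use the long exact sequence of $0\to\mathcal{O}_X\to L\to L|_C\to0$:
$$0=H^1(\mathcal{O}_X)\to H^1(L)\to H^1(L|_C)\to H^2(\mathcal{O}_X)\to H^2(L).$$
Here $H^1(L|_C)=H^1(\omega_C)\cong\mathbb{C}$ and $H^2(\mathcal{O}_X)\cong\mathbb{C}$. Moreover $H^2(L)\cong H^0(-L)^{\vee}=0$, since $L$ is nontrivial and effective. So the map $H^1(L|_C)\to H^2(\mathcal{O}_X)$ is a surjection between $1$-dimensional spaces, hence an isomorphism, and therefore $H^1(L)=0$.

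\textbf{Part (ii).} Since $L^2=0$ and $|L|$ is base-point-free, $\varphi_L$ maps $X$ onto a curve. Take its Stein factorization $X\to B\to\mathbb{P}^N$. Regularity gives $h^1(\mathcal{O}_X)=0$, which forces $B\cong\mathbb{P}^1$. The general fibre $F$ is connected and smooth with $F^2=0$, so it is an elliptic curve by adjunction. Then $L\cong\mathcal{O}_X(kF)$, where $k$ is the degree of $B\to\varphi_L(X)$ composed with the pullback of $\mathcal{O}(1)$.

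Finally, $h^0(kF)=k+1$, since all sections come from $H^0(\mathcal{O}_{\mathbb{P}^1}(k))$. We also have $h^2(kF)=0$ and $\chi(kF)=2$ by the Riemann--Roch formula. Hence $h^1(kF)=k+1-2=k-1$.
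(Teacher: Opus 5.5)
The paper gives no proof of this statement; it quotes Saint-Donat [9, Proposition 2.6]. Your outline has a genuine gap at the step you yourself flagged as the main obstacle.

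\textbf{The gap.} For $L^2>0$ you need the general member $D\in|L|$ to be irreducible (or $2$-connected) \emph{before} you know $|L|$ is base-point-free. You justify this by saying that if ``the morphism $\varphi_L$'' had a $1$-dimensional image, every member would be a sum of fibres, hence $L^2=0$. At that stage $\varphi_L$ is only a rational map, so the argument is circular as written. The correct dichotomy is Bertini's theorem for a linear system without fixed components: either $\varphi_L$ has $2$-dimensional image and the general member is irreducible, or $|L|$ is composed with a pencil. In the second case every member is $F_1+\cdots+F_k$ with $F_i$ in a pencil $|F|$. That pencil may itself have base points, and then $L^2=k^2F^2>0$. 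So ``sum of fibres, hence $L^2=0$'' is exactly what must be proved, not a consequence. Falling back on $2$-connectedness does not help either, since Proposition 3.3 presupposes base-point-freeness.

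\textbf{The missing argument.} This dimension count is the real content of [9, Proposition 2.6].
\begin{itemize}
\item Because $h^1(\mathcal{O}_X)=0$, the pencil is parametrized by $\mathbb{P}^1$: resolve the indeterminacy and take the Stein factorization.
\item Hence every member of the complete system $|L|$ comes from a divisor in $|\mathcal{O}_{\mathbb{P}^1}(k)|$, which gives $h^0(L)\leq k+1$.
\item On the other hand $h^2(L)=h^0(-L)=0$, so Riemann--Roch gives $h^0(L)\geq 2+k^2F^2/2$.
\item Since $|F|$ has no fixed components, $F^2\geq 0$, and $F^2$ is even. So $k+1\geq 2+k^2F^2/2$ forces $F^2=0$, i.e.\ $L^2=0$.
\end{itemize}
Thus $L^2>0$ implies the general member is irreducible. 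Only then does your restriction argument give base-point-freeness: $H^0(L)\to H^0(\omega_D)$ is surjective, and $\omega_D$ is globally generated for an integral Gorenstein curve of arithmetic genus $\geq 1$. This is also the order in [9], where base-point-freeness (Theorem 3.1 and Corollary 3.2, the paper's Proposition 3.2) is derived after Proposition 2.6.

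With this step inserted, the rest of your proof is correct: the base-point argument for $L^2=0$, Bertini for smoothness, the vanishing of $h^1(L)$ via $H^1(\omega_C)\cong H^2(\mathcal{O}_X)$, and the Stein factorization with the computation $\chi(kF)=2$ in (ii).
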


\smallskip

\smallskip

\noindent A crucial property of K3 surfaces is that for any irreducible curve $C$ with $C^2 \geq 0$, the linear system $|C|$ is base-point-free ([9, Theorem 3.1]). More generally, the following result is known.

\smallskip

\smallskip

\begin{prop} {\rm{{([9, Corollary 3.2])}}}. If $L$ is a non-trivial line bundle on $X$, then $|L|$ has no base point outside of its fixed components whenever $|L|\neq\emptyset$. \end{prop}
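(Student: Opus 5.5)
The plan is to split $|L|$ into its fixed part and its moving part, and reduce to the moving part, where Proposition 3.1 and the base-point-freeness of $|C|$ for irreducible $C$ with $C^2\geq0$ ([9, Theorem 3.1]) apply. Write $|L|=|M|+F$, where $F$ is the fixed part of $|L|$ and $|M|$ has no fixed component. Every member of $|L|$ has the form $D+F$ with $D\in|M|$. So a point $p\notin F$ is a base point of $|L|$ if and only if it is a base point of $|M|$. It therefore suffices to show that $|M|$ is base-point-free. If $M$ is trivial, $|M|$ consists of the zero divisor and there is nothing to prove, so we assume $M$ is non-trivial.

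Since $|M|$ has no fixed component, a general member of $|M|$ shares no component with any given irreducible curve. Hence $M$ is nef, and in particular $M^2\geq0$. We then separate two cases.

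\emph{Case $M^2>0$.} By Proposition 3.1 (i), the general member $C\in|M|$ is a smooth irreducible curve with $C^2=M^2>0$. Then $|M|=|C|$ is base-point-free by [9, Theorem 3.1].

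\emph{Case $M^2=0$.} By Proposition 3.1 (ii), $M\cong\mathcal{O}_X(kE)$ for an elliptic curve $E$ and some $k\in\mathbb{N}$, with $h^1(M)=k-1$. Since $E$ is irreducible with $E^2=0$, the pencil $|E|$ is base-point-free, again by [9, Theorem 3.1]. We have $h^2(kE)=h^0(-kE)=0$, so Riemann--Roch gives $h^0(kE)=2+0+(k-1)=k+1$. On the other hand, the sums $E_1+\cdots+E_k$ with $E_i\in|E|$ lie in $|kE|$ and span a linear subsystem of dimension $k$; this is the image of $\mathrm{Sym}^k H^0(\mathcal{O}_X(E))$, which is injective because the pencil maps $X$ onto $\mathbb{P}^1$. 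Comparing dimensions, $|kE|$ coincides with this composed subsystem. Given any point $p$, choose the member of $|E|$ through $p$ and a member $E'\in|E|$ not through $p$ (possible since $|E|$ is base-point-free). Then $kE'\in|kE|$ avoids $p$, so $|M|$ is base-point-free.

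The only step needing care is the case $M^2=0$: there we must identify $|kE|$ with the system composed of the pencil $|E|$, and this relies on the dimension count coming from $h^1(kE)=k-1$ in Proposition 3.1 (ii). The case $M^2>0$ is immediate from the cited results.
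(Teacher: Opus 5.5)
Your proof is correct. The paper gives no argument of its own and only cites Saint-Donat; your reduction---strip off the fixed part $F$, use $|L|=|L-F|+F$, and show that the moving part $|M|=|L-F|$ is base-point-free via Proposition 3.1 and the irreducible-curve form of [9, Theorem 3.1]---is the standard way [9, Corollary 3.2] is obtained.

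One simplification: in the case $M^2=0$ you do not need the dimension count $h^0(kE)=k+1$ or the identification of $|kE|$ with the system composed with the pencil. Since $kE'\in|kE|$ for every $E'\in|E|$, choosing $E'$ missing $p$ already shows $p$ is not a base point, because a base-point-free subsystem forces the whole system to be base-point-free. So the step you flag as needing care is actually superfluous.
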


\smallskip

\smallskip

\noindent Furthermore, the connectivity of divisors plays a vital role in our analysis. We say that a non-zero effective divisor $D$ on $X$ is {\textit{numerically $m$-connected}} if $D_1. D_2 \geq m$ for any decomposition $D = D_1 + D_2$ into non-zero effective divisors $D_1$ and $D_2$ on $X$.

\smallskip

\smallskip

\begin{prop} {\rm{{([9, Lemma 3.7])}}}. Any member of a base-point-free and big linear system $|L|$ is 2-connected.\end{prop}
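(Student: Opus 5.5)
The plan is to avoid cohomology and work only with intersection numbers, using nefness of $L$, the Hodge index theorem and the evenness of self-intersections on a K3 surface.

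\emph{Setup.} Let $D\in|L|$ and write $D=D_1+D_2$ with $D_1,D_2$ non-zero effective. Put $a=D_1^2$ and $b=D_2^2$; both are even by adjunction (Remark 3.1). Since $|L|$ is base-point-free, $L$ is nef, so $L.D_i\geq0$. Since $L$ is big, $L^2>0$. The basic identity is
$$L.D_1=D_1^2+D_1.D_2,\qquad L.D_2=D_2^2+D_1.D_2.$$

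\emph{Step 1: one $D_i$ has negative square.} Suppose, say, $a<0$. Then $a\leq-2$, and $D_1.D_2=L.D_1-a\geq-a\geq2$, so we are done. Hence we may assume $a,b\geq0$.

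\emph{Step 2: reduction to $D_1.D_2\in\{0,1\}$.} Suppose for contradiction that $D_1.D_2=m\leq1$. Note first that $m\geq0$: otherwise $L.D_1=a+m\geq0$ forces $a\geq1$, and then $m\geq-a$, which is awkward. To avoid this, I would use the following fact. A non-zero effective divisor $E$ with $L.E=0$ must satisfy $E^2<0$. Indeed, $E$ is not numerically trivial, since an ample class meets it positively, so the Hodge index theorem with $L^2>0$ gives $E^2<0$.

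I would then apply the Hodge inequality $(L.D_1)^2\geq L^2D_1^2$, where $L^2=a+b+2m$. It reads $(a+m)^2\geq a(a+b+2m)$, that is, $m^2\geq ab$.

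\emph{Case $m=0$.} Then $ab=0$, say $b=0$. This gives $L.D_2=0$ and $D_2^2=0$, contradicting the fact above. The same argument rules out $m<0$: then $m^2\geq ab$ leaves only very small values, and in each of them some $L.D_i\leq0$ with $D_i^2\geq0$, which is the same contradiction.

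\emph{Case $m=1$.} Then $ab\leq1$ with $a,b$ even, so $ab=0$, say $a=0$. Thus $D_1^2=0$ and $L.D_1=1$.

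\emph{Step 3 (the main obstacle): ruling out $D_1^2=0$, $L.D_1=1$.} By Riemann--Roch, $h^0(\mathcal{O}_X(D_1))\geq2$, so $|D_1|$ has a non-zero moving part $M$. Since $L$ is nef and the fixed part is effective, $L.M\leq L.D_1=1$. A moving divisor on a K3 surface satisfies $M^2\geq0$, and I would split according to $M^2$.

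If $M^2>0$, the Hodge index theorem gives $L.M\geq\sqrt{L^2M^2}\geq2$, a contradiction.

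If $M^2=0$, then Proposition 3.1(ii) gives $M\sim kF$ with $F$ an elliptic curve. Since $F$ is non-zero effective with $F^2=0$, the fact above gives $L.F\geq1$. Together with $L.M\leq1$ this forces $k=1$ and $L.F=1$.

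It remains to see that $L.F=1$ is impossible. The restriction $L|_F$ is a base-point-free line bundle of degree $1$ on a curve of genus $1$. A base-point-free line bundle of degree $1$ with at least two sections would give a degree-one map $F\to\mathbb{P}^1$, an isomorphism, which is absurd. If instead $L|_F$ has at most one section, base-point-freeness forces $L|_F\cong\mathcal{O}_F$, which contradicts $\deg L|_F=1$.

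This contradiction shows $D_1.D_2\geq2$, as required.
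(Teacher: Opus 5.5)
Your proof is correct. The paper gives no argument for this statement; it simply imports it from Saint-Donat [9, Lemma 3.7]. So your proposal is a self-contained replacement rather than a variant of reasoning in the paper.

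\textbf{Your route.} It is purely lattice-theoretic. You use:
\begin{itemize}
\item nefness and bigness of $L$;
\item the Hodge index theorem and evenness of the intersection form;
\item Riemann--Roch, to produce a pencil in $|D_1|$ when $D_1^2=0$;
\item Proposition 3.1(ii), to identify the moving part of that pencil with an elliptic pencil.
\end{itemize}

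\textbf{What this buys.} It makes clear where each hypothesis enters. Base-point-freeness is used only in the final step, to exclude an elliptic curve $F$ with $L.F=1$. Your argument therefore shows 2-connectedness for any nef and big $L$ admitting no such $F$, which is the classical obstruction to base-point-freeness of a nef and big line bundle on a K3 surface. It also avoids circularity with Remark 3.2, which the paper derives from this very proposition: your Step 3 proves the needed inequality $L.D_1\geq 2$ (for $h^0(D_1)\geq 2$) directly.

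\textbf{One passage to tidy.} The sentence asserting $m\geq 0$ (``which is awkward'') is an abandoned thought. The phrase ``$m^2\geq ab$ leaves only very small values'' misstates the actual mechanism. Write the case as follows. If $m<0$, then $L.D_i=D_i^2+m\geq 0$ gives $a,b\geq -m>0$, so $ab\geq m^2$. Combined with $m^2\geq ab$, this forces $a=b=-m$. Hence $L.D_1=L.D_2=0$ and $L^2=a+b+2m=0$, which contradicts bigness.
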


\smallskip

\smallskip

\noindent{\textbf{Remark 3.2}}. Note that a base-point-free line bundle on $X$ is nef. By Proposition 3.3, if $L_1$ is a line bundle with $h^0(L_1)\geq2$, and $L_2$ is a base-point-free and big line bundle on $X$, then  we have $L_1.L_2\geq2$.

\section{Lazarsfeld-Mukai bundles on K3 surfaces}

In this section, we review the construction and fundamental properties of Lazarsfeld-Mukai bundles (LM bundles for short), which play a central role in relating the geometry of curves to the moduli of vector bundles on K3 surfaces.

Let $X$ be a K3 surface and $C \subset X$ be a smooth curve of genus $g \geq 2$. Consider a base-point-free line bundle $A$ of degree $d$ on $C$, and let $V \subseteq H^0(A)$ be a subspace of dimension $r+1$ that forms a base-point-free linear system on $C$. The LM bundle $E_{C,(A,V)}$ associated with the data $(C, A, V)$ is defined as the dual of the kernel of the evaluation map $ev: V \otimes \mathcal{O}_X \to A$. Specifically, it fits into the following fundamental exact sequence:
$$0 \longrightarrow V^{\vee} \otimes \mathcal{O}_X \longrightarrow E_{C,(A,V)} \longrightarrow K_C \otimes A^{\vee} \longrightarrow 0. \quad (4.1)$$
Note that $V^{\vee}$ naturally identifies with an $(r+1)$-dimensional subspace of $H^0(E_{C,(A,V)})$. Since the evaluation map is surjective, $E_{C,(A,V)}$ is a locally free sheaf (vector bundle) of rank $r+1$. We simplify the notation to $E_{C,A}$ whenever $V = H^0(A)$. The basic invariants and vanishing theorems for LM bundles are summarized below:

\begin{prop} {\rm{([8, Proposition 2.1])}}. The LM bundle $E = E_{C,(A,V)}$ satisfies the following properties:

\smallskip

\smallskip

\noindent {\rm{(i)}} $\rk(E) = r+1$, $c_1(E) = \mathcal{O}_X(C)$, and $c_2(E) = d$.

\noindent {\rm{(ii)}} $h^1(E) = h^0(A) - r - 1$ and $h^2(E) = 0$.

\noindent {\rm{(iii)}} $E$ is generated by its global sections outside the base locus of $|K_C \otimes A^{\vee}|$.

\noindent {\rm{(iv)}} $\chi(E^{\vee}\otimes E)=2(1-\rho(g,r,d))$, where $\rho(g,r,d) = g - (r+1)(g-d+r)$ is the Brill-Noether number. \end{prop}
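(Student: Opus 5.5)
The plan is to prove all four items by dualizing the evaluation sequence on $X$ and then taking Chern classes and cohomology. Throughout, I regard $A$ as the sheaf $i_*A$ on $X$, where $i:C\hookrightarrow X$ is the inclusion, and set $F:=\ker(ev)$, so that
$$0\longrightarrow F\longrightarrow V\otimes\mathcal{O}_X\longrightarrow i_*A\longrightarrow 0 .$$

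\textbf{Step 1: $F$ is locally free and dualizing gives (4.1).} Since $i_*A$ is supported on the smooth curve $C$ and is locally $\mathcal{O}_C$, it has homological dimension one on $X$. Hence $F$ is locally free of rank $r+1$. Applying $\mathcal{H}om(-,\mathcal{O}_X)$ and using $\mathcal{H}om(i_*A,\mathcal{O}_X)=0$ gives
$$0\longrightarrow V^{\vee}\otimes\mathcal{O}_X\longrightarrow F^{\vee}\longrightarrow\mathcal{E}xt^1(i_*A,\mathcal{O}_X)\longrightarrow 0 .$$
Next, $\mathcal{E}xt^1(i_*A,\mathcal{O}_X)\cong A^{\vee}\otimes\mathcal{O}_C(C)$. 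Adjunction on the K3 surface ($K_X$ trivial) gives $\mathcal{O}_C(C)\cong K_C$. So with $E=F^{\vee}$ we obtain exactly (4.1), and $\rk(E)=r+1$.

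\textbf{Step 2: Chern classes, item (i).} Use $c(F)=c(i_*A)^{-1}$. By Grothendieck--Riemann--Roch, $\mathrm{ch}(i_*A)=C+(d-C^2/2)[\mathrm{pt}]$, using $\deg K_C=C^2$. This gives $c_1(i_*A)=C$ and $c_2(i_*A)=C^2-d$. Inverting the total Chern class,
$$c_1(F)=-C,\qquad c_2(F)=C^2-(C^2-d)=d .$$
Passing to the dual, $c_1(E)=C$ and $c_2(E)=d$.

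\textbf{Step 3: Cohomology, item (ii).} By Serre duality, $h^2(E)=h^0(F)$ and $h^1(E)=h^1(F)$. Take cohomology of the defining sequence of $F$ and use $h^1(\mathcal{O}_X)=0$.
\begin{itemize}
\item $H^0(F)$ is the kernel of $V\to H^0(A)$. This map is injective, so $H^0(F)=0$, and therefore $h^2(E)=0$.
\item $H^1(F)\cong\mathrm{coker}(V\to H^0(A))$. Hence $h^1(E)=h^0(A)-r-1$.
\end{itemize}
Phrasing the argument through $F$ avoids having to identify the coboundary map $H^1(K_C\otimes A^\vee)\to V^\vee\otimes H^2(\mathcal{O}_X)$ in the cohomology sequence of (4.1). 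That identification is the only delicate point, and this route bypasses it.

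\textbf{Step 4: Global generation, item (iii).} Since $h^1(\mathcal{O}_X)=0$, the map $H^0(E)\to H^0(K_C\otimes A^{\vee})$ induced by (4.1) is surjective. Let $x$ be a point outside the base locus of $|K_C\otimes A^\vee|$. Then the image of $H^0(E)$ in $E\otimes k(x)$ contains the image of $V^\vee$ and surjects onto the fibre of the quotient, which is zero if $x\notin C$. Together these span $E\otimes k(x)$, so $E$ is generated by its global sections at $x$.

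\textbf{Step 5: Item (iv).} The bundle $E^\vee\otimes E$ has rank $(r+1)^2$, $c_1=0$, and
$$c_2(E^\vee\otimes E)=2(r+1)c_2(E)-r\,c_1(E)^2=2(r+1)d-r(2g-2).$$
The Riemann--Roch formula of Section 3 then gives
$$\chi(E^\vee\otimes E)=2(r+1)^2-2(r+1)d+r(2g-2).$$
Expanding $2-2\rho(g,r,d)=2-2g+2(r+1)(g-d+r)$ yields the same expression, so $\chi(E^{\vee}\otimes E)=2(1-\rho(g,r,d))$.
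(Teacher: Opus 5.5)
Your argument is correct in all four items: the local freeness of the kernel, the identification $\mathcal{E}xt^1(i_*A,\mathcal{O}_X)\cong K_C\otimes A^{\vee}$ giving (4.1), the inversion of the total Chern class, the Serre-duality route through $F$ for (ii), the fibrewise generation argument for (iii), and $c_2(E^{\vee}\otimes E)=2(r+1)d-r(2g-2)$ in (iv) all check out. The paper does not prove this statement itself but quotes it from [8, Proposition 2.1], and what you have written is essentially the standard argument for Lazarsfeld--Mukai bundles, so there is no alternative proof in the paper to compare against.
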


\noindent Property (iv) implies that if $\rho(g,r,d) < 0$, then $\chi(E^\vee \otimes E) > 2$, which means $h^0(E^\vee \otimes E) \geq 2$. Thus, $E$ is not a simple bundle, admitting a non-zero endomorphism that is not a multiple of the identity. In the rank two case ($r=1$), such non-simplicity leads to a specific destabilizing structure.

\begin{prop} {\rm{([1, Lemma 2.1], [2, Lemma 4.4])}}. Suppose $\rk (E_{C,(A,V)}) = 2$. If $\rho(g,1,d) < 0$, then there exist line bundles $M$ and $N$ on $X$ such that $h^0(M) \geq 2$, $h^0(N) \geq 2$, and $N$ is base-point-free. Then $E_{C,(A,V)}$ is described by one of the following:

\smallskip

\smallskip

\noindent {\rm{(i)}} $E_{C,(A,V)} \cong M \oplus N$.

\noindent {\rm{(ii)}} An extension $0 \to M \to E_{C,(A,V)} \to N \otimes \mathcal{J}_W \to 0$, where $W \subset X$ is a zero-dimensional subscheme. \end{prop}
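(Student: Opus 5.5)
The plan is to exploit the non-simplicity coming from Proposition 4.1(iv) and to build the line bundles $M$ and $N$ as the kernel and the image of a suitable nilpotent-type endomorphism of $E=E_{C,(A,V)}$. Since $\rho(g,1,d)<0$, Proposition 4.1(iv) gives $h^0(E^{\vee}\otimes E)\geq 2$. So there is an endomorphism $\psi$ of $E$ that is not a multiple of the identity.

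\textbf{Step 1: an endomorphism of generic rank one.} Fix a point $x\in X$ and an eigenvalue $\lambda$ of $\psi(x)$, and set $\varphi=\psi-\lambda\,\mathrm{id}$. Then $\det\varphi\in H^0(\mathcal{O}_X)=\mathbb{C}$ vanishes at $x$, so it vanishes identically. Also $\varphi\neq 0$, because $\psi$ is not a multiple of the identity. Hence $\varphi$ has rank one at the generic point. Its kernel $M$ is a saturated, hence reflexive, rank one subsheaf of $E$, so $M$ is a line bundle. Its image is a torsion-free rank one sheaf, so it can be written as $N\otimes\mathcal{J}_W$ with $N$ a line bundle and $W$ zero-dimensional. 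This gives
$$0\longrightarrow M\longrightarrow E\longrightarrow N\otimes\mathcal{J}_W\longrightarrow 0,$$
with $c_1(M)+c_1(N)=C$ and $N\otimes\mathcal{J}_W=\varphi(E)\subset E$.

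\textbf{Step 2: properties of $N$.} By Proposition 4.1(iii), $E$ is globally generated off the base locus of $|K_C\otimes A^{\vee}|$, which is a finite subset of $C$. Therefore $N$ is globally generated off a finite set, so $|N|$ has no fixed components, and by Proposition 3.2 the bundle $N$ is base-point-free. Moreover $N$ is nontrivial. Indeed, a surjection $E\to\mathcal{J}_W$ would give a nonzero section of $E^{\vee}$. But $h^0(E^{\vee})=h^2(E)=0$ by Proposition 4.1(ii), a contradiction. A nontrivial base-point-free line bundle has $h^0(N)\geq 2$.

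\textbf{Step 3: the dichotomy.} Consider the composite $\alpha:N\otimes\mathcal{J}_W\hookrightarrow E\to N\otimes\mathcal{J}_W$, where the first map is the inclusion of $\varphi(E)$ into $E$.

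If $\alpha=0$, then $\varphi(E)\subset\ker\varphi=M$. This gives a nonzero map $N\otimes\mathcal{J}_W\to M$, so $M-N$ is effective and $h^0(M)\geq h^0(N)\geq 2$. This is case (ii).

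If $\alpha\neq 0$, then $\alpha$ is an endomorphism of the simple sheaf $N\otimes\mathcal{J}_W$, hence a nonzero scalar. So the sequence splits and $E\cong M\oplus(N\otimes\mathcal{J}_W)$. Local freeness of $E$ forces $W=\emptyset$, so $E\cong M\oplus N$. Now $M$ is also a quotient of $E$, so the argument of Step 2 applied to $M$ shows that $M$ is nontrivial and base-point-free, hence $h^0(M)\geq 2$. This is case (i).

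\textbf{Main obstacle.} The delicate point is Step 2: passing from ``globally generated off finitely many points'' to ``base-point-free with $h^0\geq 2$''. This relies on Proposition 3.2 for K3 surfaces and on the vanishing $h^0(E^{\vee})=0$, and the zero-dimensional subscheme $W$ has to be handled carefully there.
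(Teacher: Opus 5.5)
Your proposal is correct. It is essentially the standard argument behind the cited lemmas ([2, Lemma 4.4] and [1, Lemma 2.1]), which the paper quotes rather than reproves: take the kernel and image of a rank-dropping endomorphism, then split into cases according to whether the induced endomorphism of $N\otimes\mathcal{J}_W$ vanishes. The one detail you leave implicit is that the base locus of $|K_C\otimes A^{\vee}|$ is finite. This follows from $\rho(g,1,d)<0$, since then $h^0(K_C\otimes A^{\vee})=h^1(A)\geq g-d+1>g/2>0$.
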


\noindent{\textbf{Remark 4.1}}. In Proposition 4.2, if the case (ii) occurs, then $N\subset M$. Hence, we may assume that $M^2 \geq N^2$ by swapping $M$ and $N$ if necessary. By Proposition 4.2, we observe $h^0(E_{C,(A,V)} \otimes M^\vee) > 0$. By tensoring $(4.1)$ with $M^\vee$, we obtain the exact sequence
$$0 \longrightarrow V^{\vee} \otimes M^{\vee} \longrightarrow E_{C,(A,V)} \otimes M^{\vee} \longrightarrow N|_C \otimes A^{\vee} \longrightarrow 0.$$
Hence, $A$ is contained in $N|_C$ (as $V^\vee \otimes M^\vee$ has no sections if $h^0(M)>0$). In particular, $N$ must be a non-trivial line bundle.

$\;$

\noindent The structural decomposition of the bundle $E_{C,(A,V)}$ provided in Proposition 4.2 will be essential in Section 5, where we relate the second gonality of aCM curves to the intersection numbers of the line bundles $M$ and $N$.

\section{Proof of Theorem 1.1}

In this section, we prove Theorem 1.1. Let $X$ be a smooth quartic surface in $\mathbb{P}^3$, and let $H$ be the line bundle associated with a hyperplane section of $X$. Let $C\subset X$ be a smooth aCM curve of genus $g\geq9$ and degree $d$ with respect to $H|_C$, and let $A$ be a linear series $g_{d_2}^2$ on $C$ with $g>2d_2-2$ which computes $\gamma_C$. Since $h^0(H|_C(-P))=3$, by Remark 2.1, for any point $P\in C$, we obtain $d_2\leq d-1$. Since $H|_C$ is very ample, if $A\subset |H|_C|$, then we have $d_2=d-1$. We show the following proposition to prove Theorem 1.1.

\begin{prop} Let the notations be as above. Then $A\subset |H|_C|$.\end{prop}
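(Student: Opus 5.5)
The plan is to reduce the rank-three situation to a rank-two Lazarsfeld--Mukai bundle by projecting the net $A$ from a general point of $C$, and then to apply Proposition 4.2.

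\textbf{Step 1: birationality and the pencil $A(-P)$.} Since $A$ computes $\gamma_C$, it is complete and base-point-free by Remark 2.1. I first check that $\varphi_A\colon C\to\mathbb{P}^2$ is birational onto its image. If it factored through a map of degree $m\geq2$, then $C$ would carry a pencil of degree at most $d_2/2$. The bound $g>2d_2-2$ would then make its Clifford index smaller than $\gamma(A)=d_2-4$, a contradiction.

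Hence, for a general point $P\in C$, the projection from $\varphi_A(P)$ shows that $A(-P)$ is a base-point-free complete pencil $g^1_{d_2-1}$. Its Brill--Noether number is
$$\rho(g,1,d_2-1)=2d_2-g-4<0,$$
because $g>2d_2-2$.

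\textbf{Step 2: the rank-two LM bundle.} Proposition 4.2 applies to $E=E_{C,A(-P)}$. It gives line bundles $M,N$ with $h^0(M),h^0(N)\geq2$, $N$ base-point-free, $M+N\sim C$, and $M.N=d_2-1-\mathrm{length}(W)\leq d_2-1$. By Remark 4.1, $A(-P)\subset N|_C$.

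Since $P$ is general and the pair $(M,N)$ ranges over a discrete set (Pic is countable), the same $N$ works for infinitely many $P$. This should give $h^0(N|_C-A+P)>0$ for infinitely many $P$, hence $N|_C-A$ is effective, i.e. $A\subset|N|_C|$. In particular $h^0(N|_C)\geq3$.

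\textbf{Step 3: reduce to a statement about $N$.} By Step 2 it now suffices to show that $H-N$ is effective. Then $A\subset|N|_C|\subset|H|_C|$, with the second inclusion obtained by adding the restriction of an effective divisor in $|H-N|$ (or $N=H$).

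\textbf{Step 4: numerical constraints on $N$.} I will combine the following.
\begin{itemize}
\item $N|_C$ contributes to $\gamma_C$, so that $\deg N|_C-2(h^0(N|_C)-1)\geq d_2-4$.
\item The Hodge index inequality applies to $M,N,H$.
\item $M.N\leq d_2-1<(g+1)/2$.
\item $C\sim sH+D$ with $D$ as in Proposition 2.1 (Remark 2.2).
\end{itemize}
Writing $C^2=2g-2=M^2+2M.N+N^2$ together with $M^2\geq N^2$ (Remark 4.1), the goal is to show that $N^2$ and $N.H$ are forced to be small.

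In the case $N^2=0$, $N$ is an elliptic pencil. Then $h^0(N|_C)\geq3$ should contradict either Remark 3.2 or the bound $N.C\leq d_2$. In the case $N^2>0$, $N.H\leq4$ should be forced, so that $N\sim H$ by the Hodge index theorem ($N^2\cdot4\leq(N.H)^2$), unless $H-N$ is effective.

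\textbf{Main obstacle.} The hardest step is this last numerical step, excluding every $N$ other than $H$ or a divisor dominated by $H$. Here the aCM hypothesis must enter. I would first use $\chi(\mathcal{O}_X(H-N))=2+(H-N)^2/2$: when $(H-N)^2\geq-2$, either $H-N$ or $N-H$ is effective. If $N-H$ is effective, I would then exploit that $C-H=(s-1)H+D$ has the restricted shape of Proposition 2.1 to bound $M.N$ from below. The aim is to contradict $M.N\leq d_2-1$ via the inequality $g>2d_2-2$ and $g\geq9$.
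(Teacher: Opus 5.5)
\textbf{The gap is Step 4, which is the actual content of the proposition.} Your Step 3 reduction (it suffices to show that $H-N$ is effective) is exactly the paper's reduction to the inclusion (5.4). Everything that actually proves Proposition 5.1 happens after that point, and your Step 4 is only a list of intentions (``should be forced'', ``I would exploit'').

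The paper first shows $N.H\le 4$ and then concludes with Lemma 5.1(iii). It does this by a case analysis over the five types of $D$ in Proposition 2.1 and over $N^2=0,2,4$ or $N^2\ge 6$. The tools are:
\begin{itemize}
\item the inequality (5.5);
\item $H.(M-N)\ge 0$, coming from $N\subset M$ (Lemma 5.1(ii));
\item $s\ge 2$ (Lemma 5.1(iv)), Castelnuovo's bound (Remark 5.1), the bounds on $s$ in Lemma 5.2, parity arguments, and $\rho(g,1,d_2)<0$;
\item repeatedly, the classification of aCM line bundles, used to see that $M$ is aCM. Then $h^1(M)=0$, which either contradicts $h^1(E_{C,(A,V)})=h^0(A)-2=1$ or forces $A=|N|_C|$ and then violates $\rho<0$.
\end{itemize}

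Your proposed device, the dichotomy from $\chi(\mathcal{O}_X(H-N))$, requires $(H-N)^2\ge -2$, i.e.\ $N^2\ge 2N.H-6$. This fails in many of the cases that must be excluded, for instance $N^2=2,\ N.H=5$; $N^2=4,\ N.H=6$; $N^2=6,\ N.H=7$. Even when $N-H$ is effective, you do not say how the lower bound on $M.N$ would come out.

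Note also that $A(-P)$ is a \emph{complete} pencil, so $h^1(E_{C,A(-P)})=0$. The paper's main source of contradictions is therefore unavailable to you, including its proof that $N\subset M$, and the case analysis would have to be rebuilt.

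Your setting does give one thing for free. The bound $M.N\le d_2-1$ together with $\deg N|_C\ge d_2$ gives $N^2=N.C-M.N\ge 1$. This disposes of the elliptic case at once, whereas the paper needs Lemma 5.1(i),(iii) and a subcase in each of Propositions 5.2--5.6 for it.

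\textbf{Two earlier steps also need repair.}

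In Step 1, suppose $\varphi_A$ has degree $m\ge 2$ onto a plane curve of degree $d_2/m$. Projecting from a point of the image only yields a pencil of degree $d_2-m$, which is $\le d_2/2$ only when the image is a conic. Its Clifford index is at most $d_2-m-2$, which beats $\gamma_C=d_2-4$ only when $m\ge 3$. For $m=2$ there is no contradiction, and then for general $P$ the conjugate point is a base point of $A(-P)$. The paper sidesteps this by taking a general two-dimensional $V\subset H^0(A)$, which is base-point-free whether or not $\varphi_A$ is birational. Remark 4.1 then gives $A\subset N|_C$ directly.

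In Step 2, the implication ``$h^0(L(P))>0$ for infinitely many $P$ implies $h^0(L)>0$'' is false in general: take $L$ non-effective of degree $g-1$. It does hold here, with $L=N|_C-A$, and you should include the argument:
\begin{itemize}
\item $K_C-L=M|_C+A$ is effective, because $h^0(M|_C)\ge h^0(M)\ge 2$ (as $h^0(-N)=0$).
\item By Riemann--Roch, $h^0(L(P))=h^0(L)$ for every $P$ off the finite base locus of $|K_C-L|$.
\end{itemize}
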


\noindent Let $V\subset H^0(A)$ be a general subspace of dimension two. Then it forms a base-point-free pencil $|V|$ on $C$. Since $\rho(g,1,d_2)<0$, by Proposition 4.2, there exist two line bundles $M$ and $N$ on $X$ with $h^0(M)\geq2$, $h^0(N)\geq2$, such that $N$ is base-point-free, and
$$E_{C,(A,V)}\cong M\oplus N,\quad (5.1)$$
or there exists a zero-dimensional subscheme $W\subset X$ such that $E_{C,(A,V)}$ sits in the exact sequence
$$0\longrightarrow M \longrightarrow E_{C,(A,V)}\longrightarrow N\otimes\mathcal{J}_W \longrightarrow0.\quad (5.2)$$
Hence, we have
$$N.C-N^2=M.N\leq d_2\leq d-1.\quad (5.3)$$

\smallskip

\smallskip

\noindent By Remark 4.1, it is sufficient to show the following inclusion to prove Proposition 5.1.
$$N\subset H.\quad (5.4)$$
Indeed, since $g\geq9$, we have $h^0(\mathcal{O}_X(1)(-C))=0$. Since $h^0(N^{\vee}(1)(-C))=0$, if $N\subset H$, then, by the exact sequence
$$0\longrightarrow N^{\vee}(1)(-C)\longrightarrow N^{\vee}(1)\longrightarrow N^{\vee}(1)|_C\longrightarrow0,$$
we have $h^0(N^{\vee}(1)|_C)\geq h^0(N^{\vee}(1))>0$. Since $N|_C\subset H|_C$, by Remark 4.1, we obtain $A\subset H|_C$. By Remark 2.2, there exist a divisor $D$ on $X$ and $s\in\mathbb{N}$ such that $\mathcal{O}_X(D)$ is an initialized and aCM line bundle on $X$, and $C\sim sH+D$. Hence, by the inequality (5.3), we have
$$s(N.H-4)\leq D.(H-N)+N^2-1.\quad (5.5)$$
We prepare several lemmas to prove the inclusion (5.4), for each case as in Proposition 2.1 (ii).

\newtheorem{lem}{Lemma}[section]

\begin{lem} Keeping above notations, the following statements hold.

\smallskip

\smallskip

\noindent {\rm{(i)}} If $N^2=0$, then there exists an elliptic curve $F$ on $X$ such that $F\in|N|$.

\smallskip

\noindent {\rm{(ii)}} $N\subset M$.

\smallskip

\noindent {\rm{(iii)}} If $N.H\leq 4$, then $N\subset H$.

\smallskip

\noindent {\rm{(iv)}} If $N.H\geq5$, then $s\geq2$.

\smallskip

\noindent {\rm{(v)}} If $N^2=2$ and $h^1(M)=0$, then $A=|N|_C|$. 

\smallskip

\noindent {\rm{(vi)}} If $N^2=4$, then $N.H\neq5$.

\smallskip

\noindent {\rm{(vii)}} $(M-N)^2\geq 4s^2+2(D.H-8)s+D^2-4D.H+4$. \end{lem}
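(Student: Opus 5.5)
The plan is to prove the seven items separately. Throughout, the basic identities are $C\sim M+N$ (from $c_1$ in Proposition 4.1(i)), $C\sim sH+D$, $M.N\le d_2\le d-1$ from (5.3), and the fact that $N$ is base-point-free, hence nef, with $h^0(N)\ge2$. I would do (vii) and (vi) first, since they are pure intersection arithmetic.

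\emph{Item (vii).} Since $C\sim M+N$, we have $(M-N)^2=C^2-4M.N\ge C^2-4(d-1)$. Substitute $C^2=(sH+D)^2=4s^2+2sD.H+D^2$ and $d=C.H=4s+D.H$. This gives exactly $4s^2+2(D.H-8)s+D^2-4D.H+4$.

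\emph{Item (vi).} Suppose $N^2=4$ and $N.H=5$. Then $(H-N)^2=4-10+4=-2$, so by Riemann--Roch $H-N$ or $N-H$ is effective. Since $H.(H-N)=-1<0$, it must be $N-H$, which is effective of degree $1$, i.e. a line $L$. So $N\sim H+L$ and $N.L=H.L+L^2=1-2=-1$, contradicting the nefness of $N$.

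\emph{Item (i).} By Proposition 3.1(ii), $N\cong\mathcal{O}_X(kF)$ with $F$ elliptic, and I need $k=1$. Since $N^2=0$, $F.C=N.C/k=M.N/k\le d_2/k$. So $F|_C$ is a pencil contributing to $\gamma_C$ with Clifford index at most $d_2/k-2$. If $k\ge2$ this is $<d_2-4=\gamma_C$ as soon as $d_2>4$. That holds here, because $g\ge9$ and $\gamma(A)=\gamma_C$ force $d_2\ge5$ (e.g. $d_2\le4$ would make $C$ a plane quartic or hyperelliptic).

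\emph{Item (v).} Suppose $N^2=2$ and $h^1(M)=0$. Restrict $N$ to $C$: the sequence $0\to -M\to N\to N|_C\to0$ and $h^1(-M)=h^1(M)=0$ give $h^0(N|_C)=h^0(N)=2+N^2/2=3$. By Remark 4.1, $A\subset N|_C$, so $N|_C=A(B)$ for some effective $B$ with $h^0(A)=h^0(N|_C)=3$. Hence $B$ lies in the base locus of $|N|_C|$. But $|N|_C|$ is base-point-free because $N$ is, so $B=0$ and $A=N|_C$.

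\emph{Item (iv).} Suppose $N.H\ge5$ and $s\le1$. If $s=0$, then $C=D$ is one of the curves of Proposition 2.1(ii) with $D^2\le4$, so $g\le3$, a contradiction. If $s=1$, I insert each type (a)--(d) of $D$ into (5.5), which reads $N.H-4\le D.H-D.N+N^2-1$. I bound $N^2$ using the Hodge index theorem, $4N^2\le (N.H)^2$, and $D.N$ from below using nefness of $N$ (with Remark 3.2 when $h^0(D)\ge2$). Together with $g\ge9$, i.e. $(H+D)^2\ge16$, this should yield a contradiction in each case.

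\emph{Item (ii).} In case (5.2) this is Remark 4.1. In case (5.1), order the summands so that $M^2\ge N^2$. Item (vii), or directly $(M-N)^2=C^2-4M.N$ together with $g>2d_2-2$, gives $(M-N)^2\ge -4$. When $(M-N)^2\ge-2$, Riemann--Roch makes $\pm(M-N)$ effective. I would then show it is $M-N$ by comparing $(M-N).H$ and using $M^2\ge N^2$. The boundary case $(M-N)^2=-4$ needs a separate argument, for instance that $M.N=d_2$ and $g=2d_2-1$ exactly, combined with the Hodge index theorem on $M,N$.

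\emph{Item (iii).} By the Hodge index theorem, $N.H\le4$ gives $N^2\le4$. If $N^2=4$, equality in Hodge forces $N\sim H$. If $N^2=2$, then $(H-N)^2\ge-2$ and $H.(H-N)\ge0$, so $H-N$ is effective. If $N^2=0$, then by (i) $N=F$ is an elliptic curve of degree at most $4$. Degree at most $3$ forces $F$ to be planar, so $H-F$ is effective.

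The hard case, which I expect to be the main obstacle, is an elliptic quartic $F$ with $H.F=4$, where $(H-F)^2=-4$. My first attempt: $A\subset F|_C$ forces $h^0(F|_C)\ge3$, hence $\gamma(F|_C)\le F.C-4\le d_2-4=\gamma_C$. This gives $F.C=d_2$ and $F|_C\cong A$, and so $h^1(F-C)=h^1(M)\neq0$. I would then try to contradict this using the aCM structure of $C\sim sH+D$.
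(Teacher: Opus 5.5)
The proof of (vii) and (v) is the paper's. In (vi), your nefness argument ($N.L=-1$) is a clean substitute for the paper's fixed-component argument. Item (i) is also the paper's argument, but you have not justified $d_2>4$: hyperelliptic $C$ is not excluded. The simplest fix is that for $k\ge2$ one has $d_2\ge N.C=kF.C\ge 2F.H\ge 6$. The genuine gaps are in (ii), (iv) and (iii).

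\textbf{Items (ii) and (iv).} In the split case $E_{C,(A,V)}\cong M\oplus N$ your argument stops at $(M-N)^2=-4$, where $\chi(M-N)=0$. The proposed Hodge-index fix cannot work: in that boundary case $M.N=d_2$ and $M^2+N^2=2d_2-4$, so $(M.N)^2\ge M^2N^2$ holds automatically.

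The missing idea is that the split case never occurs. Since $V\subsetneq H^0(A)$, Proposition 4.1(ii) gives $h^1(E_{C,(A,V)})=h^0(A)-2=1$. If $E\cong M\oplus N$, both summands are base-point-free, because $E$ is globally generated off a finite set. They cannot both have square $0$, since otherwise $2g-2=2M.N\le 2d_2<g+2$. A summand of positive square has $h^1=0$ by Proposition 3.1(i). A summand of square $0$ is a single elliptic curve by (i), so it too has $h^1=0$. Hence $h^1(E)=0$, a contradiction, and (ii) then follows from Remark 4.1.

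This matters for (iv). Your sketch, done before (ii) with (5.5), pairwise Hodge, nefness, Remark 3.2 and $g\ge9$, does not close. For example, take $s=1$, $D$ of type (c), $N\sim D$ and $M\sim H$. Then $N^2=2$, $N.H=5$, $D.N=2$, $g=9$, and every inequality you list holds. With (ii) available, the paper's proof is immediate: $H.(M-N)\ge0$ gives $10\le 2N.H\le 4+H.D\le 10$. So $H.(M-N)=0$ and $H+D\sim 2N$, whence $4N^2=(H+D)^2=20$, which is impossible.

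\textbf{Item (iii).} You correctly reduce the elliptic quartic case $N=F$, $F.H=4$ to proving $h^1(M)=0$. Once that holds, (5.6) gives $h^0(N|_C)=h^0(N)=2<3=h^0(A)$. However, you give no argument for $h^1(M)=0$, and it is the heart of the item.

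The paper proceeds as follows. Since $(2H-N)^2=0$ and $H.(2H-N)=4$, the class $2H-N$ is effective. It is base-point-free, because a fixed component would produce a fixed line of $|N|$. So $|2H-N|$ contains a smooth elliptic curve $F_0$, and $M\sim (s-2)H+D+F_0$. The sequence $0\to\mathcal{O}_X(D)(s-2)\to M\to M|_{F_0}\to 0$ then gives $h^1(M)=0$. This uses two facts: $h^1(\mathcal{O}_X(D)(s-2))=0$ by the aCM property of $\mathcal{O}_X(D)$, and $M|_{F_0}$ has positive degree on the elliptic curve $F_0$, so $h^1(M|_{F_0})=0$.
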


{\textit{Proof}}. (i) Assume that $N^2=0$. Then, by Proposition 3.1 (ii), there exist an elliptic curve $F$ and $t\in\mathbb{N}$ such that $N\sim tF$. By the inequality (5.3), we have $N.C=M.N\leq d_2$. By Remark 4.1, we have $A\cong N|_C$. Since $h^0(A)=3$ and $h^0(M^{\vee})=0$, by the exact sequence 
$$0\longrightarrow M^{\vee}\longrightarrow N\longrightarrow N|_C\longrightarrow0,\quad (5.6)$$
we have $t=1$ or 2. We show that $t=1$. First of all, by the exact sequence
$$0\longrightarrow \mathcal{O}_X(F-C)\longrightarrow \mathcal{O}_X(F)\longrightarrow \mathcal{O}_C(F)\longrightarrow0,$$
we have $h^0(\mathcal{O}_C(F))\geq2$. Since $2d_2-2<g$, we have $F.C=d_2/t\leq d_2\leq g-1$. Hence, $\mathcal{O}_C(F)$ contributes to $\gamma_C$, and we obtain 
$$\gamma(\mathcal{O}_C(F))=F.C-2(h^0(\mathcal{O}_C(F))-1)\leq F.C-2=\frac{1}{t}N.C-2.$$
Since $F.C\geq F.H\geq 3$, if $t=2$, then $\gamma_C=N.C-4=2F.C-4\geq2$. Hence,
$$\frac{1}{t}N.C-2=\frac{1}{2}\gamma_C<\gamma_C.$$
This contradicts the minimality of $\gamma_C$. Therefore, $t=1$.

\smallskip

\smallskip

(ii) Assume that $N\nsubseteq M$. By Remark 4.1, we obtain the isomorphism (5.1). Hence, $M$ and $N$ is base-point-free. Since $\rho(g,1,d_2)<0$ and $g\geq9$, if $N^2=0$, we have $M^2>0$. In fact, if $M^2=0$, then we have $2g-2=C^2=(M+N)^2=2d_2<g+2$. This case does not occur. Hence, by Proposition 3.1 (i), we have $h^1(M)=0$. By the above argument, this implies that $h^1(E_{C,(A,V)})=0$. This contradicts the assertion of Proposition 4.1 (ii).

By the assertion of (i), if $N^2>0$, then we may assume that $M^2>0$. By Proposition 3.1 (i) and the same reason as above, we obtain the contradiction $h^1(E_{C,(A,V)})=0$.

\smallskip

\smallskip

(iii) Since $N^2\geq0$ and $H$ is not hyperelliptic, by Remark 3.2, $N.H\geq3$. Assume that $N.H\leq 4$. Then, by the Hodge index theorem, $N^2\leq4$. 

We consider the case where $N^2=0$. If $N.H=3$, then $(H-N)^2=-2$ and $H.(H-N)=1$. Hence, $N\subset H$. Assume that $N.H=4$. Since $(2H-N)^2=0$ and $H.(2H-N)=4$, by the Riemann-Roch theorem, we have $h^0(2H-N)>0$. Moreover, since $|N|$ is base-point-free, $|2H-N|$ is also base-point-free. In fact, if $|2H-N|$ is not base-point-free, then, by Proposition 3.2, it has a fixed component $\Gamma$. We denote by $N_0$ the movable part of $|2H-N|$. Since $H$ is not hyperelliptic, $N_0.H=3$ and $\Gamma.H=1$. Since $\Gamma$ is a line on $X$ and the general member of $|N_0|$ is irreducible, by Proposition 3.1, we have $h^1(\mathcal{O}_X(N_0))=0$. Since, by Proposition 2.1, $2H-N$ is an aCM line bundle on $X$, we obtain $h^1(2H-N)=0$. Hence, by the Riemann-Roch theorem, we get
$$\chi(N_0)=h^0(N_0)=h^0(2H-N)=\chi(2H-N).$$
This means that $N_0^2=0$. Since $2H-N\sim\Gamma+N_0$, we have $N\sim (H-\Gamma)+(H-N_0)$. Since $(H-\Gamma)^2=0$, $(H-N_0)^2=-2$, and $(H-\Gamma).(H-N_0)=1$, the member of $|H-N_0|$ is the fixed component of $|N|$. This is a contradiction. Hence, if we take a smooth irreducible member $F_0\in |2H-N|$, then $h^1(M|_{F_0})=h^0(M^{\vee}|_{F_0})=0$. Since $\mathcal{O}_X(D)$ is aCM, $h^1(\mathcal{O}_X(D)(s-2))=0$. By the exact sequence
$$0\longrightarrow\mathcal{O}_X(D)(s-2)\longrightarrow M\longrightarrow M|_{F_0}\longrightarrow0,$$
we have $h^1(M)=0$. By the assertion of (i), $|N|$ is a pencil. Hence, by the exact sequence (5.6), $|N|_C|$ is also a pencil. Since $A\cong N|_C$, this is a contradiction.

We consider the case where $N^2=2$. If $N.H=3$, then $(H-N)^2=0$ and $H.(H-N)=1$. However, since the movable part of $|H-N|$ is not empty, by Remark 3.2, this case does not occur. If $N.H=4$, we have $(H-N)^2=-2$, and hence, $h^0(N-H)>0$ or $h^0(H-N)>0$. Since $H.(N-H)=0$ and $N-H$ is not trivial, this contradicts the ampleness of $H$. 

We consider the case where $N^2=4$. By the Hodge index theorem, we have $N.H\geq4$. If $N.H=4$, then $(N-H)^2=0$ and $H.(N-H)=0$. Since $H$ is ample, we have $N\sim H$. By the above observation, we obtain the assertion.

\smallskip

\smallskip

(iv) Assume that $N.H\geq 5$ and $s=1$. Since $H$ is ample, by the assertion of (ii), we obtain
$$H.(H+D-2N)=H.(C-2N)=H.(M-N)\geq0.$$
Since, by Proposition 2.1, $H.D\leq6$, we have
$$10\geq 4+H.D\geq 2N.H\geq 10.$$
This means that $H.D=6$, $D^2=4$, and $N.H=5$. Since $H.(H+D-2N)=0$, by the ampleness of $H$, we have $H+D\sim 2N$, and hence, $N^2=5$. By Remark 3.1, this case does not occur.

\smallskip

\smallskip

(v) Since $N^2=2$, we have $h^0(N)=3$. Since $h^1(M)=0$ and $h^0(M^{\vee})=0$, by the exact sequence (5.6), $N|_C$ is base-point-free, and $h^0(N|_C)=3$. By Remark 4.1 and the hypothesis on $A$, we obtain $A=|N|_C|$.

\smallskip

\smallskip

(vi) If $N^2=4$ and $N.H=5$, then $(N-H)^2=-2$ and $H.(N-H)=1$. By the ampleness of $H$, we have $h^0(N-H)>0$. However, since $N$ is base-point-free, $h^0(N)=4$ and $h^0(H)=4$, we have a contradiction.

\smallskip

\smallskip

(vii) The inequality (5.5) implies that $N.C\leq H.D+N^2-1+4s$. Since $(M-N)^2=(C-2N)^2$, the assertion follows immediately. $\hfill\square$

$\;$

\noindent {\textbf{Remark 5.1}}. If $N^2\geq6$, then, by the inequality (2.1), $N.H\geq6$. In particular, if $N.H=6$, then $N^2=6$.

\begin{lem} If $N^2\geq6$, then 
$$3-\frac{H.D}{4}\leq s\leq\frac{1}{16}\{(H.D-2)^2+60-4D^2\}.$$\end{lem}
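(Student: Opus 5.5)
The lower bound should follow from ampleness alone, and the upper bound from playing the lower bound on $(M-N)^2$ in Lemma 5.1 (vii) against the Hodge index theorem applied to $M-N$ and $H$.

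\emph{Lower bound.} By Lemma 5.1 (ii) we have $N\subset M$, so $M-N$ is effective and $H.(M-N)\geq0$ because $H$ is ample. Since $C\sim M+N$, this gives
$$4s+H.D=H.C=H.(M+N)\geq 2N.H.$$
By Remark 5.1, the hypothesis $N^2\geq6$ forces $N.H\geq6$. Hence $4s+H.D\geq12$, which is exactly $s\geq 3-H.D/4$.

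\emph{Upper bound.} Put $n:=N.H\geq6$. Then
$$H.(M-N)=H.C-2N.H=4s+H.D-2n\geq0.$$
The Hodge index theorem, in the form $(H.Z)^2\geq H^2Z^2=4Z^2$ (valid for any divisor $Z$ since $H^2>0$), applied to $Z=M-N$, gives
$$(M-N)^2\leq\frac{(4s+H.D-2n)^2}{4}.$$
Combining this with Lemma 5.1 (vii), the terms $4s^2$ cancel. The result is the linear inequality
$$4(n-4)s\;\leq\;\frac{(H.D-2n)^2}{4}-D^2+4H.D-4,$$
which bounds $s$ in terms of $n$, $H.D$ and $D^2$.

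It remains to eliminate $n$. The coefficient $4(n-4)$ on the left is at least $8$. The right-hand side is quadratic in $n$, so I would bound it using the constraints $6\leq n\leq (4s+H.D)/2$ and show that the worst case gives $16s\leq (H.D-2)^2+60-4D^2$. If this crude elimination does not reach the stated constant, I would refine it in two ways. First, I would return to (5.5) itself, keeping the terms $D.N$ and $N^2$ instead of passing through (vii). Second, I would use the Hodge index inequalities $(N.H)^2\geq 4N^2$ and $(D.N)^2\geq D^2N^2$ to control $D.N$ from below, which gives a sharper bound on $N.C=sn+D.N$. Throughout, $(H.D,D^2)$ ranges over only the four cases of Proposition 2.1 (ii), so the final verification can be done case by case.

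The main obstacle is this elimination of $n=N.H$, together with the possible need to keep $D.N$ explicit: the numerical bookkeeping must land exactly on the stated constant $(H.D-2)^2+60$. The qualitative argument, an ampleness lower bound and a Hodge index upper bound, should be robust.
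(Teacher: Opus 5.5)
Your lower bound is exactly the paper's argument. For the upper bound, your inequality $4(n-4)s\le \frac{(H.D-2n)^2}{4}-D^2+4H.D-4$ is correct, and it comes from a different Hodge-index step than the paper's.

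\begin{itemize}
\item The paper applies Hodge to $M$ and $N$ separately, $M^2+N^2\le (d_M^2+d_N^2)/4$, and feeds this into $2M.N=C^2-M^2-N^2$ together with $M.N\le 4s+H.D-1$.
\item Your single application to $M-N$ is sharper. When both are written as $4(n-4)s\le\cdots$, your right-hand side is smaller than the paper's by $\frac{(H.D)^2}{4}-D^2\ge0$.
\end{itemize}

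\textbf{The gap.} You leave the elimination of $n$ open, and the method you sketch fails. Suppose you use $4(n-4)\ge 8$ on the left and bound the right side separately over $6\le n\le (4s+H.D)/2$. The right side is increasing in $n$ there, since $2n\ge12>H.D$. At $n=(4s+H.D)/2$ it equals $4s^2-D^2+4H.D-4$, so you only get $8s\le 4s^2+\cdots$, which bounds nothing. Your fallback refinements through $D.N$ and $(D.N)^2\ge D^2N^2$ are not what is needed.

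\textbf{The missing idea.} Keep $n$ on both sides and use monotonicity. Put $g(n)=\frac{(2n-H.D)^2}{4}-4(n-4)s-D^2+4H.D-4$, so your inequality reads $g(n)\ge0$. A direct computation gives $g(n)-g(6)=(n-6)(n+6-H.D-4s)$. This is $\le0$ because $n\ge6$ and $n\le (4s+H.D)/2\le 4s+H.D-6$; the last step is your lower bound $4s+H.D\ge12$. Hence $g(6)\ge0$, that is, $32s\le (H.D-4)^2+112-4D^2$. This is the same device as the paper's $f(d_N)\le f(6)$: in both cases the quadratic has its vertex at the right endpoint $(4s+H.D)/2$.

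This does not land on the stated constant; it is stronger. To reach the stated form you need one more Hodge-index step, $(H.D)^2\ge H^2D^2=4D^2$, because $2\{(H.D-2)^2+60-4D^2\}-\{(H.D-4)^2+112-4D^2\}=(H.D)^2-4D^2$. With these two additions your route proves the lemma. It even gives slightly more: for instance $s\le 3$ when $D^2=0$, $H.D=4$, where the paper's bound still allows $s=4$ and Proposition 5.3 must exclude it by a separate argument.
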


{\textit{Proof}}. Since $C\sim M+N$, we have $2M.N=C^2-(M^2+N^2)$. By the Hodge index theorem, $(H.M)^2\geq 4M^2$ and $(H.N)^2\geq4N^2$, and hence, we obtain
$$\frac{(M.H)^2+(N.H)^2}{4}\geq M^2+N^2.$$
If we set $d_M:=M.H$ and $d_N:=N.H$, we get
$$2M.N\geq C^2-\frac{d_M^2+d_N^2}{4}.$$
By Remark 5.1, we have $d_N\geq6$. Moreover, since $d_M+d_N=H.C=4s+H.D$, by Lemma 5.1 (ii), we have $d_N\leq 2s+H.D/2$. Hence, $s\geq 3-H.D/4$. If we set
$$f(d_N)=\frac{1}{4}\{(4s+H.D-d_N)^2+d_N^2\},$$
then $f(d_N)\leq f(6)$, and hence,
$$2M.N\geq C^2-f(d_N)\geq C^2-f(6)=12s+D^2-9-\frac{(H.D-6)^2}{4}.$$
On the other hand, by the inequality (5.3), we have $M.N\leq 4s+H.D-1$. Therefore, we obtain the assertion. $\hfill\square$

\smallskip

\smallskip

\noindent In order to prove Proposition 5.1, we separate it into several cases as in Proposition 2.1 (ii).

\smallskip

\smallskip

\begin{prop} If $D^2=-2$, then $N\subset H$.\end{prop}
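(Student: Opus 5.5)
The plan is to reduce to a numerical statement. By Lemma 5.1 (iii), it suffices to show that $N.H\leq 4$. So assume, for contradiction, that $N.H\geq5$. Then Lemma 5.1 (iv) gives $s\geq2$.

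Since $D^2=-2$, Proposition 2.1 (a) gives $H.D\in\{1,2,3\}$, and $D$ is effective. As $N$ is base-point-free, it is nef, so $D.N\geq0$. Inequality (5.5) then becomes
$$s(N.H-4)\leq H.D-D.N+N^2-1\leq N^2+2. \quad (\ast)$$
I would use $(\ast)$ to cut the possibilities down to a finite list, splitting according to $N^2$, which is even and nonnegative.

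\emph{Case $N^2=0$.} Here $(\ast)$ gives $s(N.H-4)\leq2$. With $s\geq2$ this forces $s=2$, $N.H=5$, $H.D=3$ and $D.N=0$.

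\emph{Case $N^2=2$.} We get $N.H\in\{5,6\}$, and $s\leq 4$ or $s\leq2$ respectively.

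\emph{Case $N^2=4$.} Lemma 5.1 (vi) excludes $N.H=5$, so $N.H\geq6$ and $s\leq3$.

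\emph{Case $N^2\geq6$.} Lemma 5.2 with $D^2=-2$ and $H.D\leq3$ gives
$$\tfrac{9}{4}\leq 3-\tfrac{H.D}{4}\leq s\leq \tfrac{1}{16}\{(H.D-2)^2+68\}<5,$$
so $s\in\{3,4\}$. Combined with $(\ast)$ and the Hodge bound $(N.H)^2\geq4N^2$, this leaves only finitely many triples $(N^2,N.H,s)$.

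For each surviving triple, I would obtain a contradiction from Lemma 5.1 (vii) together with the Hodge index theorem applied to $M-N$. By Lemma 5.1 (ii), $M-N$ is effective. Its degree is
$$(M-N).H=4s+H.D-2N.H\geq0,$$
and its square is $(M-N)^2=(C-2N)^2$, which is computed from $C^2=(sH+D)^2=4s^2+2sH.D-2$, from $C.N$, and from $N^2$. If $(M-N)^2>0$, then Hodge gives
$$(M-N)^2\leq \frac{((M-N).H)^2}{4}.$$
This should clash with the lower bound in Lemma 5.1 (vii), namely $4s^2+2(H.D-8)s+2-4H.D$, or with $M.N\leq d_2\leq d-1$ from (5.3). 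If instead $(M-N).H=0$, ampleness of $H$ forces $M\sim N$, and then $C\sim2N$; this should give a parity or degree contradiction as in the proof of Lemma 5.1 (iv). If $(M-N)^2\leq0$, I would compare with the lower bound in (vii) for the given $s$ and $H.D$.

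The hardest cases will be the equality or near-equality ones, such as $N^2=0$, $N.H=5$, $s=2$, $H.D=3$, and $N^2=2$ with small $s$, where the numerics alone may not close. For $N^2=0$, Lemma 5.1 (i) gives $N\sim F$ with $F$ elliptic and $A\cong F|_C$. Since $h^0(F)=2$, I would use the sequence (5.6) and $h^1(M)$ to control $h^0(F|_C)$, as in the proof of Lemma 5.1 (iii); this contradicts $h^0(A)=3$ unless $h^1(M)\neq0$. The latter should in turn be excluded by restricting to a curve in $|2H-N|$ or $|H|$ and using that $\mathcal{O}_X(D)$ is aCM. For $N^2=2$, I would show $h^1(M)=0$ the same way and apply Lemma 5.1 (v) to get $A=|N|_C|$. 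Then $\gamma_C=N.C-4$, and I would compare this with the Clifford index of $H|_C(-P)$, which is $d-5$. Since $N.C=N.H\cdot s+N.D$ exceeds $d-1$ for $N.H\geq5$, this contradicts the minimality of $\gamma_C$.
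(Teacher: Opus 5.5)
Your reduction matches the paper's: Lemma 5.1(iii), $s\geq 2$, the split by $N^2$, the inequality $(\ast)$, and $s\in\{3,4\}$ from Lemma 5.2 when $N^2\geq 6$. But the case analysis, which is the whole content of the proof, is not carried out, and the mechanism you propose for it cannot close it.

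Lemma 5.1(vii) is just (5.5) rewritten: substitute $N.C=sN.H+D.N$ into $(C-2N)^2$. So once $(M-N)^2$ is computed, it adds nothing to $(\ast)$, and Hodge gives nothing when $(M-N)^2\leq 0$. Several configurations pass every test on your list:

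\begin{itemize}
\item For $N^2=4$, the configuration $(s,N.H,H.D,D.N)=(3,6,3,0)$ gives $(M-N).H=3$ and $(M-N)^2=-4$, exactly the bound in (vii). The paper excludes it with Remark 3.2, since $(H+D-N)^2=0$ and $H.(H+D-N)=1$.
\item For $N^2\geq 6$, the case $s=3$, $H.D=2$, $N\sim H+D$, $M\sim 2H$ is numerically consistent, with $(M-N)^2=-2$. It is excluded only by counting sections: $h^0(E_{C,(A,V)})=g+4-d_2=28-d_2$ against $h^0(M)+h^0(N\otimes\mathcal{J}_W)\leq 15$, where $d_2=12+\ell(W)$.
\end{itemize}

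You also never use the fact that an effective divisor of $H$-degree $1$ is a line, hence has square $-2$. That is how the paper disposes of $N^2=0$ (where $(M-N)^2=-14$), and, for $N^2\geq 6$, of $s=3$ with $N.H=7$, and of $N.H=6$ with $H.D=1$.

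Your fallback for $N^2=2$ also fails in specific subcases:

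\begin{itemize}
\item For $(s,N.H,H.D,D.N)=(4,5,3,0)$ you have $M.N=N.C-2=18=d-1$, so $d_2=M.N$ and $W=\emptyset$. Then $h^1(E_{C,(A,V)})=1$ and $h^1(N)=0$ force $h^1(M)\geq 1$, so the vanishing you want to prove is simply not available. The paper uses instead that $D.N=0$ and $H.D=3$ give $(N-D)^2=0$ and $H.(N-D)=2$. Then the moving part of $|N-D|$ would cut a $g^1_2$ on a smooth plane quartic. This non-hyperellipticity of $H$ is never invoked in your plan.
\item For $(2,5,3,0)$ your claim that $N.C$ exceeds $d-1$ is false, since $N.C=10=d-1$; the Clifford-index comparison with $H|_C(-P)$ then gives equality, not a contradiction.
\item For $(2,5,3,2)$, again $W=\emptyset$ and $h^1(M)\geq 1$. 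The paper uses $\rho(g,1,d_2)=4>0$, i.e.\ the hypothesis $g>2d_2-2$; you use that hypothesis only to obtain $M$ and $N$. The line argument on $M-N$ would also work here.
\end{itemize}

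Finally, where $h^1(M)=0$ does hold (e.g.\ $D.N=1$, or $D.N=0$ with $H.D\leq 2$), it does not come from the aCM property of $\mathcal{O}_X(D)$ via restriction to $|2H-N|$. It comes from applying Proposition 2.1 to $N-D$ (square $-2$ or $0$, degree between $2$ and $4$), so that $M=sH-(N-D)$ is aCM.
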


{\textit{Proof}}. By Lemma 5.1 (iii), it is sufficient to show that $N.H\leq4$. Assume that $N.H\geq5$. By Lemma 5.1 (iv), we have $s\geq2$. Since $N$ is base-point-free, $N.D\geq0$. By Proposition 2.1, we have $H.D\leq3$. 

We consider the case where $N^2=0$. By the inequality (5.5), we have
$$s(N.H-4)\leq D.(H-N)-1.$$
Hence, we obtain $s=2$, $N.H=5$, $N.D=0$, and $H.D=3$. Thus, $H.(M-N)=1$. By Lemma 5.1(ii), the member of $|M-N|$ is a line on $X$. However, since $(M-N)^2=C^2-4N.C=-14$, this case does not occur.

We consider the case where $N^2=2$. Similarly, by the inequality (5.5), we have
$$s(N.H-4)\leq D.(H-N)+1.\quad (5.7)$$
Since $D.(H-N)+1\leq4$ and $s\geq2$, if $N.H\geq6$, by the inequality (5.7), we obtain $N.H=6$, $H.D=3$, $D.N=0$, $d_2=M.N=N.C-2$, and $s=2$. This means that $\rho(g,1,d_2)=2N.C-g-6=4>0$. However, this contradicts the hypothesis of Theorem 1.1. Since $N.H=5$, we have $D.N\leq H.D+1-s\leq2$. If $D.N=2$, then $D.(H-N)+1\leq2$. By the inequality (5.7), we have $s=2$, $H.D=3$, $D.N=2$, and $d_2=M.N=N.C-2$. Thus, $\rho(g,1,d_2)=2N.C-g-6=4>0$. This also is a contradiction. Hence, $D.N\leq1$. If $D.N=1$, by the inequality (5.7), $H.D\geq2$. Thus, $2\leq H.(N-D)\leq3$. Since $(N-D)^2=-2$, by Proposition 2.1, $M$ is aCM and hence, $h^1(M)=0$. By Lemma 5.1 (v), $A=|N|_C|$. We obtain $d_2=N.C=5s+1$. By the inequality (5.3), we have $5s+1\leq d-1\leq 4s+2$. This contradicts the fact that $s\geq2$. Thus, we have $D.N=0$. In this case, $(N-D)^2=0$ and $2\leq H.(N-D)\leq4$. Since $H$ is not hyperelliptic, $H.(N-D)=3$ or 4. By Proposition 2.1, $N-D$ is aCM and hence, $M$ is also aCM. Thus, $h^1(M)=0$. By Lemma 5.1 (v), we obtain $A=|N|_C|$. Hence, $d_2=N.C=5s$. On the other hand, since $1\leq H.D\leq 2$, by the inequality (5.3), we have $5s\leq d-1\leq 4s+1$. This also contradicts the fact that $s\geq2$. 

We consider the case where $N^2=4$. By Lemma 5.1 (vi), $N.H\geq6$. Since $H.D\leq3$, by the ampleness of $H$, we have
$$0\leq H.(M-N)=H.(sH+D-2N)\leq 4s-9.$$
Thus, $s\geq3$. By the inequality (5.5), we have
$$s(N.H-4)\leq D.(H-N)+3,$$
and hence, $N.H=6$, $H.D=3$, and $D.N=0$. In this case, we obtain $(H+D-N)^2=0$, and $H.(H+D-N)=1$. However, this contradicts Remark 3.2.

We consider the case where $N^2\geq6$. By Lemma 5.2, we have $3\leq s\leq4$. By the inequality (5.5),
$$D.N\leq H.D+N^2-1-s(N.H-4),\quad (5.8)$$
and hence, we have 
$$(H+D-N)^2\geq 2(s-1)N.H-N^2-8s+4.\quad (5.9)$$
By Lemma 5.1 (ii) and the ampleness of $H$, we obtain
$$4s+H.D-2N.H=H.(C-2N)=H.(M-N)\geq0.\quad (5.10)$$
By Remark 5.1, we have $6\leq N.H\leq9$. Assume that $N.H=9$. Then we have $H.(M-N)\leq1$. If $H.(M-N)=0$, then $s=4$ and $H.D=2$. By the ampleness of $H$, we have $C\sim 2N$. This means that $4N^2=C^2=78$. This case does not occur. Since $H.(M-N)=1$, we have $s=4$, $H.D=3$, and the member of $|M-N|$ is a line on $X$. However, by Lemma 5.1 (vii), we have the contradiction $(M-N)^2\geq 4H.D+2=14$. Hence, $N.H\leq8$. 

Assume that $s=4$. If $N.H=8$, by the Hodge index theorem, we have $N^2\leq 16$. By the inequality (5.9), we have $(H+D-N)^2\geq 20-N^2\geq4$ and $1\leq H.(N-H-D)\leq3$. By the Hodge index theorem, this is a contradiction. If $N.H=7$, then $N^2\leq 12$. By the inequality (5.9), $(H+D-N)^2\geq 14-N^2\geq2$ and $0\leq H.(N-H-D)\leq2$. However, this case also does not occur. Since $N.H=6$, by Remark 5.1, $N^2=6$. By the inequality (5.9), $(H+D-N)^2\geq2$ and $-1\leq H.(N-H-D)\leq 1$. By the same reason as above, this is the contradiction. 

Assume that $s=3$. By the inequality (5.10), $N.H\leq 7$. Assume that $N.H=7$. Then $H.(M-N)\leq1$. If $H.(M-N)=0$, then $H.D=2$. We have $C\sim 2N$, and hence, $4N^2=C^2=46$. However, this case does not occur. Since $H.(M-N)=1$, $H.D=3$ and the member of $|M-N|$ is a line on $X$. On the other hand,
$$(M-N)^2=(C-2N)^2=C^2-4N.C+4N^2=52-4N.C+4N^2\equiv 0 \mod 4.$$
This is a contradiction. We obtain $N.H=6$. By Remark 5.1, $N^2=6$. First of all, we have
$$H.(M-N)=H.D,\quad (5.11)$$
and
$$(M-N)^2=6H.D-14-4N.D.\quad (5.12)$$
If $H.D=1$, then, by the equality (5.11), the member of $|M-N|$ is a line on $X$. However, since $(M-N)^2=-8-4N.D$, this case does not occur. If $H.D=2$, by the equality (5.11), one of the following cases occurs.

\smallskip

\smallskip

\noindent (i) There exist two lines $\Gamma_1$ and $\Gamma_2$ on $X$ with $\Gamma_1.\Gamma_2=0$ or $-2$ such that $\Gamma_1+\Gamma_2\in |M-N|$.

\noindent (ii) There exists a conic $\Gamma$ on $X$ such that $\Gamma\in |M-N|$ (not necessarily irreducible).

\noindent By the equality (5.12), $(M-N)^2=-2-4N.D=-2$. Hence, $N.D=0$. Since $(N-D)^2=H.(N-D)=4$, by the ampleness of $H$, we have $N-D\sim H$. Since $N\sim H+D$, we have $M\sim 2H$. On the other hand, by the Riemann-Roch theorem, we have
$$h^0(E_{C,(A,V)})=\chi(E_{C,(A,V)})+h^1(E_{C,(A,V)})=g-d_2+4=28-d_2,$$
$h^0(N)=5$, and $h^0(M)=10$. By the exact sequence (5.2), we have $28-d_2\leq 15$. Since $d_2\geq 13=d-1$ and $N$ is base-point-free, the length of $W$ is zero. Since $h^1(N)=h^1(M)=0$, this contradicts the assertion of Proposition 4.1 (ii). Thus $H.D=3$. By the inequality (5.9), $2-2N.D=(H+D-N)^2\geq-2$ and $H.(H+D-N)=1$. Hence, the member of $|H+D-N|$ is a line on $X$ and $N.D=2$. Since $d_2=M.N$, in the exact sequence (5.2), the length of $W$ is zero. By Proposition 2.1, $M$ is aCM and hence, $h^1(M)=0$. Since $h^1(N)=0$, we have the contradiction $h^1(E_{C,(A,V)})=0$, by Proposition 4.1. By the above observation, we obtain the consequence of Proposition 5.2. $\hfill\square$

$\;$

\begin{prop} If $D^2=0$ and $D\neq0$, then $N\subset H$.\end{prop}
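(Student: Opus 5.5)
Following the proof of Proposition 5.2, it suffices by Lemma 5.1 (iii) to show $N.H\leq4$. So I would assume $N.H\geq5$, get $s\geq2$ from Lemma 5.1 (iv), and derive a contradiction. Since $D^2=0$ and $D\neq0$, Proposition 2.1 (b) gives $H.D\in\{3,4\}$. By Proposition 3.1 (ii) and initializedness, $D$ is an elliptic curve $E$ with $h^0(D)=2$, so $|D|$ is an elliptic pencil. Since $N$ is base-point-free, $N.D\geq0$. Moreover $N.D\geq2$ whenever $N^2>0$ (Remark 3.2). When $N^2=0$ and $N\not\equiv D$, we still have $N.D\geq1$. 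In that case $N.D=0$ forces $N\sim D$, since both are elliptic pencils. I would then split according to $N^2\in\{0,2,4\}$ and $N^2\geq6$, using (5.5) throughout in the form
$$s(N.H-4)\leq H.D-N.D+N^2-1.$$

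For $N^2=0$, the right-hand side of (5.5) is at most $H.D-N.D-1\leq3$. Together with $s\geq2$ and $N.H\geq5$, this forces $s=2$ or $3$, $N.H=5$, and small $N.D$. The subcase $N\sim D$ is impossible because $H.D\leq4$. In the remaining subcases, $H.(M-N)=4s+H.D-10$ is small. I would compare this with $(M-N)^2=C^2-4N.C$ computed from $C\sim sH+D$, in the same way as the line argument in Proposition 5.2, or with Lemma 5.1 (vii), to get a numerical contradiction. For $N^2=2$, the bound gives $s(N.H-4)\leq H.D+1-N.D\leq3$. Hence $N.H=5$, $s\leq3$, and $N.D\leq H.D+1-s$.

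In the $N^2=2$ cases, and in the analogous $N^2=4$ cases (where $N.H\geq6$ by Lemma 5.1 (vi), so $s(N.H-4)\geq 2s$), I would look at $N-D$ and $H+D-N$. These have small self-intersection and small $H$-degree, so Proposition 2.1 and Remark 3.2 apply to them. Either $M=(s-1)H+(H+D-N)$ is aCM, so that $h^1(M)=0$ and Lemma 5.1 (v) gives $A=|N|_C|$, which contradicts $d_2\leq d-1=4s+H.D-1$ via $N.C$. Or $\rho(g,1,d_2)\geq0$, which contradicts the hypothesis $g>2d_2-2$. Or a nonexistent line or conic appears.

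For $N^2\geq6$, Lemma 5.2 with $D^2=0$ and $H.D\in\{3,4\}$ gives $2\leq s\leq (H.D-2)^2/16+60/16$, so $s\leq4$. Then (5.10) together with Remark 5.1 bounds $6\leq N.H\leq 2s+H.D/2$. I would run through the finitely many triples $(s,N.H,H.D)$ using (5.9), the Hodge index theorem on $H+D-N$, parity of $(M-N)^2=(C-2N)^2$ modulo $4$, and the $h^1(E_{C,(A,V)})\neq0$ argument from Proposition 4.1 (ii) once $M$ and $N$ are shown to be aCM with $W=\emptyset$.

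I expect the main obstacle to be this last $N^2\geq6$ enumeration, especially the borderline $s=4$, $H.D=4$ and $N.H=6,7,8$ cases. There the Hodge-index estimates are weakest, and I would need to identify $N$ explicitly (for example $N\sim H+D$ or $N-D\sim H$) and then count sections in (5.2) as in the $H.D=2$ case of Proposition 5.2.
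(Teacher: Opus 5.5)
Your plan is correct and follows essentially the same route as the paper: reduction to $N.H\leq4$ via Lemma 5.1 (iii), the split by $N^2$, and the same toolkit of (5.5), Lemma 5.1 (v), the sign of $\rho(g,1,d_2)$, Lemma 5.2, the Hodge index theorem and the $h^1(E_{C,(A,V)})\neq0$ contradiction. The only real deviation is for $N^2=0$: you reduce to $s=2$, $H.D=4$, $N.D=1$ and conclude because an effective divisor of $H$-degree $2$ cannot have $(M-N)^2=-12$, whereas the paper gets $h^0(N-D)>0$ and contradicts the base-point-freeness of the pencil $|N|$.

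Two small corrections. First, when $H.D=4$ the divisor $D$ need not be an elliptic curve (e.g.\ $(H-\ell)+\Gamma$ for skew lines $\ell,\Gamma$ on $X$), but you only use $h^0(\mathcal{O}_X(D))=2$ and $N.D\geq1$, and both still hold. Second, the $s=4$ cases you expect to be hardest fall at once, either by Lemma 5.1 (vii) plus the Hodge index theorem or by your own estimate on $H+D-N$. The genuinely delicate case is $s=3$, $H.D=3$, $N.H=7$: there mod-$4$ parity gives nothing, and you must force $N\sim H+D$, $M\sim 2H$, $W=\emptyset$ to reach the $h^1$ contradiction.
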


{\textit{Proof}}. By Proposition 2.1, $3\leq H.D\leq4$. By Lemma 5.1 (iii), it is sufficient to show that $N.H\leq 4$. Assume that $N.H\geq 5$. We note that, by Lemma 5.1 (iv), $s\geq2$.

We consider the case where $N^2=0$. Since $N.D\geq0$, by the inequality (5.5), $N.H=5$ and hence,  $N.D\leq1$. Since $(N-D)^2\geq-2$ and $H.(N-D)\geq1$, we have $h^0(N-D)>0$. However, since, by Lemma 5.1 (i), $|N|$ is a base-point-free pencil on $X$ and $h^0(\mathcal{O}_X(D))=2$, this case does not occur.

We consider the case where $N^2=2$. By Remark 3.2, $N.D\geq2$. By the inequality (5.5), $N.H=5$ and 
$$(s, H.D, N.D)=(3,4,2), (2,4,2), (2,4,3),\text{or } (2,3,2).$$
If $(s, H.D, N.D)=(2,4,2)$, then  $(N-D)^2=-2$ and $H.(N-D)=1$. Since, by Proposition 2.1, $M$ is aCM, we have $h^1(M)=0$. By Lemma 5.1 (v), $A=|N|_C|$ and hence, $d_2=N.C$. This implies the contradiction $\rho(g,1,d_2)=2N.C-g-2=5>0$. Assume that $(s, H.D, N.D)\neq(2,4,2)$. Then, by the inequality (5.5), $d_2=M.N$. If $s=3$, we have $(N-D)^2=-2$ and $H.(N-D)=1$. Hence, by Proposition 2.1, $M$ is aCM. Since $h^1(M)=0$, by Lemma 5.1 (v), $d_2=N.C$. This is a contradiction. If $s=2$, then we have the contradiction $\rho(g,1,d_2)=2N.C-g-6=5+2D.(N-H)=3>0$. 

We consider the case where $N^2=4$. By Lemma 5.1 (vi), $N.H\geq6$. If $s=2$, by Lemma 5.1 (ii), $H.(M-N)=0$, and hence, we have $C\sim 2N$. Since $(2H+D)^2=C^2=4N^2$, we have the contradiction $H.D=0$. Thus, $s\geq3$. Since $N$ is base-point-free and big, by Remark 3.2, we have $N.D\geq2$. However, by the inequality (5.5), we have the following contradiction.
$$6\leq s(N.H-4)\leq D.(H-N)+3\leq5.$$

We consider the case where $N^2\geq6$. By Lemma 5.2, $s\leq4$. Assume that $s=4$. By Lemma 5.2, $H.D=4$. By Lemma 5.1 (vii), we have $(M-N)^2\geq 20$.
By the Hodge index theorem, we have
$$20-2N.H=H.(C-2N)=H.(M-N)\geq9,$$
and hence, $N.H\leq5$. However, this contradicts Remark 5.1. Thus, $s\leq3$. 

Assume that $s=3$. If $H.D=4$, then, by Lemma 5.1 (vii), $(M-N)^2\geq0$. Since $H$ is not hyperelliptic, $16-2N.H=H.(M-N)\geq3$. By Remark 5.1, we have $N.H=N^2=6$. By the inequality (5.8), $(N-D)^2\geq0$. However, since $H$ is not hyperelliptic and $H.(N-D)=2$, we have the contradiction. Hence, $H.D=3$. Since $d=H.C=15$, we have $15-2N.H=H.(M-N)\geq0$. By Remark 5.1, $6\leq N.H\leq 7$. 

Assume that $N.H=7$. Since $H.(M-N)=1$, we have $M^2-N^2=C.(M-N)=3+D.(M-N)$. Since $M^2-N^2\equiv 0 \mod 2$, we get
$$D.(M-N)\equiv 1\mod 2.\quad (5.13)$$
On the other hand, since $H.D=3$ and $H.(M-N)=1$, the members of $|H-D|$ and $|M-N|$ are lines on $X$. Hence, 
$$1-D.(M-N)=(H-D).(M-N)=0,1,\text{ or }-2.$$
By the equality (5.13), $D.(M-N)=1$ or 3. If $D.(M-N)=1$, then $M^2=N^2+4$. Since $(M-N)^2=-2$ and $(M+N)^2=C^2=54$, we have $M.N=14$. Hence, we have the contradiction $N^2=11$. If $D.(M-N)=3$, then $M-N\sim H-D$. Hence, $M\sim 2H$ and $N\sim H+D$. In this case, since $M.N=14=d-1$, we have $M.N=d_2$. Hence, in the exact sequence (5.2), the length of $W$ is zero. Since $h^1(M)=h^1(N)=0$, we obtain the contradiction $h^1(E_{C,(A,V)})=0$, by Proposition 4.1 (ii). Hence, $N.H=6$. By Remark 5.1, we have $N^2=6$. By the inequality (5.8), we have $N.D\leq2$. Hence, $(H+D-N)^2=4-2N.D\geq0$. However, by Remark 3.2, we have the contradiction $H.(H+D-N)=1$. Thus, $s=2$. By Lemma 5.2,  $H.D=4$. Since $d=12$, we have $N.H=6$. By Remark 5.1, $N^2=6$ and $H.(M-N)=0$. By Lemma 5.1 (ii) and the ampleness of $H$, we have $C\sim 2N$. However, we have the contradiction $32=C^2=4N^2=24$. $\hfill\square$

\begin{prop} If $D=0$, then $N\subset H$.\end{prop}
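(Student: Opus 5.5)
The plan is to follow the scheme of Propositions 5.2 and 5.3. By Lemma 5.1 (iii) it suffices to show $N.H\leq 4$. So assume $N.H\geq5$ and derive a contradiction. Here $C\sim sH$, so $d=4s$ and $g=2s^2+1$. The hypothesis $g\geq9$ gives $s\geq2$, which also follows from Lemma 5.1 (iv). With $D=0$, the inequality (5.5) reads
$$s(N.H-4)\leq N^2-1,\quad (\ast)$$
and Lemma 5.1 (ii) with the ampleness of $H$ gives $H.(M-N)=4s-2N.H\geq0$, that is, $N.H\leq 2s$. I will split into cases according to $N^2$; since $N$ is base-point-free, $N^2\geq0$ is even.

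If $N^2=0$, the right-hand side of $(\ast)$ is $-1$ while the left-hand side is positive, a contradiction. If $N^2=2$, then $(\ast)$ gives $s(N.H-4)\leq1$, which is impossible since $s\geq2$ and $N.H\geq5$. If $N^2=4$, then Lemma 5.1 (vi) forces $N.H\geq6$, and $(\ast)$ gives $2s\leq s(N.H-4)\leq3$, again impossible for $s\geq2$.

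The remaining case is $N^2\geq6$, and this is the only step needing more than a one-line check. Write $x:=N.H$. By Remark 5.1 we have $x\geq6$, and by the Hodge index theorem $N^2\leq x^2/4$. Combining $s\geq x/2$ with $(\ast)$ gives
$$\frac{x}{2}(x-4)\leq s(x-4)\leq N^2-1\leq\frac{x^2}{4}-1,$$
hence $x^2-8x+4\leq0$, so $x\leq 7$. Therefore $x\in\{6,7\}$.

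If $x=6$, then Remark 5.1 gives $N^2=6$, and $(\ast)$ becomes $2s\leq5$, so $s\leq2$. But $s\geq x/2=3$, a contradiction.

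If $x=7$, then $s\geq 7/2$, so $s\geq4$. On the other hand $N^2\leq 49/4$ and $N^2$ is even, so $N^2\leq12$, and $(\ast)$ gives $3s\leq11$, so $s\leq3$. This is again a contradiction.

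I do not expect a genuine obstacle. One could alternatively invoke Lemma 5.2, which gives $3\leq s\leq4$ when $D=0$, and then check the finitely many pairs $(s,N.H)$ against Lemma 5.1 (vii). The direct estimate above avoids that casework. Thus $N.H\leq4$ in every case, and Lemma 5.1 (iii) gives $N\subset H$.
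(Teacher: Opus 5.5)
Your proof is correct. Up to the case $N^2\geq 6$ it matches the paper's: the paper compresses your cases $N^2=0,2$ into the remark that (5.14) with $s\geq 2$ forces $N^2\geq 4$, and it disposes of $N^2=4$ via Lemma 5.1 (vi) just as you do.

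In the case $N^2\geq 6$ the two routes diverge.
\begin{itemize}
\item The paper first invokes Lemma 5.2 to get $3\leq s\leq 4$ and then splits on $s$. For $s=3$ it uses $N.H\leq 2s=6$. For $s=4$ it uses Lemma 5.1 (vii) to get $(M-N)^2\geq 4$, then the Hodge index theorem applied to $M-N$ to get $H.(M-N)\geq 4$, i.e.\ $N.H\leq 6$. In both subcases Remark 5.1 gives $N^2=6$, and (5.14) fails.
\item You instead apply the Hodge index theorem to $N$ itself and combine it with $N.H\leq 2s$ and (5.14). This gives $x^2-8x+4\leq 0$, hence $N.H\in\{6,7\}$, and you split on $N.H$ rather than on $s$.
\end{itemize}

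Your version is self-contained and a bit shorter: it needs neither Lemma 5.2 nor Lemma 5.1 (vii), nor any upper bound on $s$. The paper's version reuses lemmas designed to treat all the cases of Proposition 2.1 uniformly.

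Both arguments depend on Remark 5.1 to force $N^2=6$ when $N.H=6$. Your quadratic bound alone cannot exclude $N.H=6$, $N^2=8$, $s=3$.
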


{\textit{Proof}}. By Lemma 5.1 (iii), we prove $N.H\leq 4$. Assume that $N.H\geq5$. By the inequality (5.5), we have
$$s(N.H-4)\leq N^2-1.\quad (5.14)$$
Since $s\geq2$, we have $N^2\geq4$. If $N^2=4$, then, by Lemma 5.1 (vi), we have $N.H\geq6$. This contradicts the inequality (5.14). Hence, $N^2\geq6$. By Lemma 5.2, $3\leq s\leq4$. If $s=4$, by Lemma 5.1 (vii), we have $(M-N)^2\geq4$. By the Hodge index theorem, we obtain
$$16-2N.H=H.(4H-2N)=H.(M-N)\geq4.$$
By Remark 5.1, we have $N.H=6$ and $N^2=6$. However, this also contradicts the inequality (5.14). Assume that $s=3$. Since $d=3H^2=12$, we have $N.H=6$, and hence, $N^2=6$. By the same reason as above, this is a contradiction. $\hfill\square$

\begin{prop} If $D^2=2$, then $N\subset H$.\end{prop}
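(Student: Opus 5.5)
We argue as in Propositions 5.2--5.4. By Proposition 2.1 (ii)(c) we have $H.D=5$, so $C\sim sH+D$ with $d=4s+5$ and $C^2=4s^2+10s+2$. By Lemma 5.1 (iii) it suffices to show $N.H\leq4$. So assume $N.H\geq5$; then Lemma 5.1 (iv) gives $s\geq2$. In this situation the inequality (5.5) reads
$$s(N.H-4)\leq 4-D.N+N^2. \quad (\ast)$$
We split according to $N^2\in\{0,2,4\}$ and $N^2\geq6$. In each case the aim is to reach one of the contradictions already used: $\rho(g,1,d_2)>0$; a numerical impossibility (Hodge index, Remark 3.2, or parity of $(M-N)^2$); or, when $M$ and $N$ are both aCM and $W=\emptyset$, the vanishing $h^1(E_{C,(A,V)})=0$, which contradicts Proposition 4.1 (ii).

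\emph{Case $N^2=0$.} By Lemma 5.1 (i), $|N|$ is an elliptic pencil $|F|$. The Hodge index theorem applied to $N$ and $D$, together with the fact that $D$ is a genus-$2$ curve, should force $N.D\geq2$. Then $(\ast)$ gives $s=2$ and $N.H=5$ with $N.D=2$. In that case $(N-D)^2=-2$ and $H.(N-D)=0$. By Riemann--Roch and the ampleness of $H$ this is impossible unless $N-D$ is trivial, and $N\sim D$ is excluded since $N^2\neq D^2$.

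\emph{Case $N^2=2$.} The Hodge index theorem gives $(N.D)^2\geq N^2D^2=4$, hence $N.D\geq2$, and $(\ast)$ becomes $s(N.H-4)\leq 4$. This leaves the triples $(s,N.H,N.D)$ with $N.H\in\{5,6\}$ and small $N.D$. For each triple I will compute $(N-D)^2$ and $H.(N-D)$. Whenever $N-D$ (or $M-(s-1)H$) is numerically one of the aCM types of Proposition 2.1, I conclude that $M$ is aCM, so $h^1(M)=0$. Lemma 5.1 (v) then gives $A=|N|_C|$ and $d_2=N.C$. This contradicts either $d_2\leq d-1$ or the hypothesis $g>2d_2-2$. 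In the remaining triples equality holds in (5.3), so $d_2=M.N$, and the direct computation $\rho(g,1,d_2)=2N.C-g-2N^2-2>0$ gives the contradiction.

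\emph{Case $N^2=4$.} Lemma 5.1 (vi) gives $N.H\geq6$, and Hodge index gives $N.D\geq3$. Then $(\ast)$ yields $2s\leq s(N.H-4)\leq5$, so $s=2$, $N.H=6$ and $N.D=3$. Hence $H.(M-N)=13-12=1$, so by Lemma 5.1 (ii) $M-N$ is a line and $(M-N)^2=-2$. On the other hand $(M-N)^2=C^2-4N.C+16=38-4(24+3)+16$ is far from $-2$, which is a contradiction.

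\emph{Case $N^2\geq6$.} Lemma 5.2 gives $2\leq s\leq 61/16$, so $s\in\{2,3\}$. Also (5.10) gives $N.H\leq 2s+2$, and Remark 5.1 gives $N.H\geq6$.
\begin{itemize}
\item For $s=2$: we get $N.H=6$, $N^2=6$ and $H.(M-N)=1$, so $M-N$ is a line. I will derive the contradiction from the parity of $(M-N)^2=C^2-4N.C+4N^2$ modulo $4$, exactly as in the $s=3$, $N.H=7$ step of Proposition 5.2.
\item For $s=3$: we have $N.H\in\{6,7,8\}$. I will use (5.9) to bound $(H+D-N)^2$ from below and compare it with the small value of $H.(H+D-N)$ via the Hodge index theorem. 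This should kill $N.H=8$ and most subcases of $N.H=7$.
\end{itemize}
The surviving configurations should be those in which $M-N$ is a line or a conic, or $H+D-N$ is a line, which forces something like $N\sim H+D-\Gamma$ with $M$ aCM. There I will show that $d_2=M.N$, so $W=\emptyset$ in (5.2). With $h^1(M)=h^1(N)=0$ this contradicts Proposition 4.1 (ii).

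I expect this last case, $s=3$ with $N^2\geq6$, to be the main obstacle. It has several subcases, and for each one the right numerical invariant (parity, Hodge index, or aCM-ness of $M$) has to be found by hand.
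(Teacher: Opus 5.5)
\textbf{Gap in the case $N^2\geq 6$.} Your cases $N^2=0,2,4$ are essentially sound. The case $N^2\geq 6$, however, has a genuine gap.

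For $s=2$, the parity argument you borrow from Proposition 5.2 fails. Here $C^2=38\equiv 2 \pmod 4$, so $(M-N)^2=C^2-4N.C+4N^2\equiv 2\equiv -2 \pmod 4$, which is perfectly compatible with $M-N$ being a line. In Proposition 5.2 it worked only because $C^2=52\equiv 0$. What the line condition actually gives is $(M-N)^2=14-4N.D=-2$, i.e.\ $N.D=4$. Hence $(N-D)^2=0$ and $H.(N-D)=1$. Since $H.(D-N)<0$, Riemann--Roch gives $h^0(N-D)\geq 2$, contradicting Remark 3.2.

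For $s=3$ you give no argument, and the endgame you predict is not the one that occurs. After the numerics, the surviving configuration is $N.H=N^2=6$, $D.N=4$, with equality in (5.5), so $d_2=16$ and $g=35$. There:
\begin{itemize}
\item $H.(M-N)=5$ and $H.(H+D-N)=3$, so neither $M-N$ nor $H+D-N$ is a line or conic;
\item the initialized twist $H+D-N$ of $M$ has $(H+D-N)^2=2$ and degree $3$, so $M$ is not aCM;
\item $\rho(35,1,16)=-5<0$.
\end{itemize}
So neither aCM-ness of $M$ with $W=\emptyset$ nor $\rho$ can close this case.

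The paper argues as follows. Lemma 5.1 (vii) gives $(M-N)^2\geq 4$, so the Hodge index theorem gives $H.(M-N)\geq 4$, i.e.\ $N.H\leq 6$. Remark 5.1 then gives $N.H=N^2=6$. Hodge index on $D,N$ gives $D.N\geq 4$, and (5.5) gives $D.N\leq 4$. So again $(N-D)^2=0$ and $H.(N-D)=1$, contradicting Remark 3.2.

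Your (5.9)-route with the Hodge index theorem can also reach this configuration. But (5.9) must first be re-derived for $D^2=2$, $H.D=5$; its constant becomes $8$ rather than $4$. The missing idea is that for $D^2=2$ the recurring obstruction is a pencil $|N-D|$ of $H$-degree one, not aCM-ness of $M$.

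\textbf{Smaller slips.} In case $N^2=4$, $(\ast)$ only yields $N.D\in\{3,4\}$, not $N.D=3$. Also $N.C=2N.H+N.D=12+N.D$, not $24+3$. With the correct numbers, $(M-N)^2=6-4N.D\leq -6\neq -2$, so your line argument does handle both values. It is in fact a tidier route than the paper's, which uses Remark 3.2 for $N.D=3$ and $\rho>0$ for $N.D=4$.

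In case $N^2=2$, your dichotomy (aCM type, else equality in (5.3)) misses the triples $(s,N.H,N.D)=(2,5,3)$ and $(3,5,3)$. For the first, equality fails; for the second, equality holds but $\rho(35,1,16)<0$. Both are vacuous, however: $(N-D)^2=-2$ together with $H.(N-D)=0$ contradicts the ampleness of $H$, exactly as in your $N^2=0$ case.
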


{\textit{Proof}}. By Lemma 5.1 (iii), we show that $N.H\leq 4$. Assume that $N.H\geq5$.

We consider the case where $N^2=0$. Since $s\geq2$ and, by Proposition 2.1, $H.D=5$, we have $D.N\leq2$ by the inequality (5.5). Thus $(N-D)^2\geq-2$ and $H.(N-D)\geq0$. This means that $h^0(N-D)>0$. By Lemma 5.1 (i), $|N|$ is a pencil. This is a contradiction.

We consider the case where $N^2=2$. By the inequality (5.5), $D.N\leq4$. If $D.N\leq3$, we have $(N-D)^2=4-2N.D\geq-2$, and $H.(N-D)\geq0$. Hence, by the ampleness of $H$, we have $h^0(N-D)>0$. Since $\mathcal{O}_X(D)$ is aCM, we obtain $h^0(\mathcal{O}_X(D))=3$. Since $|N|$ is base-point-free net, $N\sim D$. Since $M\sim sH$, we have $h^1(M)=0$. By Lemma 5.1 (v), we have $A=|N|_C|$. Since $5s+2=N.C=d_2\leq d-1=4s+4$, we have $s=2$. Since $d_2=12$ and $g=20$, we obtain the contradiction $\rho(g,1,d_2)=2d_2-g-2=2>0$. Hence, $D.N=4$, $s=2$, $N.H=5$, and $d_2=M.N=N.C-2$. However, we have the contradiction $\rho(g,1,d_2)=2N.C-g-6=2>0$.

We consider the case where $N^2=4$. By Lemma 5.1 (vi), we have $N.H\geq6$. By the Hodge index theorem, we have $D.N\geq3$. If $D.N=3$, we have $(N-D)^2=0$. Since $s\geq2$, by the inequality (5.5), $N.H=6$. Thus, $H.(N-D)=1$. This contradicts Remark 3.2. Hence, $D.N\geq4$. By using the inequality (5.5) again, we obtain $s=2$, $N.H=6$, $D.N=4$, and $d_2=M.N=N.C-4$. Hence, we obtain the contradiction $\rho(g,1,d_2)=2N.C-g-10=2>0$.

We consider the case where $N^2\geq6$. By Lemma 5.2, $s\leq 3$. If $s=3$, then, by Lemma 5.1 (vii), $(M-N)^2\geq4$. By the Hodge index theorem, $H.(M-N)\geq4$. Since $d=C.H=17$, we obtain $N.H\leq6$. By Remark 5.1, $N.H=N^2=6$. By the Hodge index theorem, $D.N\geq4$. By the inequality (5.5), $D.N=4$. This means that $(N-D)^2=0$ and $H.(N-D)=1$. However, this contradicts Remark 3.2. Hence, $s=2$. Since $d=C.H=13$, by Lemma 5.1 (ii), $N.H\leq6$. By Remark 5.1, $N.H=N^2=6$. Since $H.(M-N)=1$, the member of $|M-N|$ is a line on $X$, and hence, $(2H+D-2N)^2=(C-2N)^2=(M-N)^2=-2$. We obtain $D.N=4$. Since $(N-D)^2=0$ and $H.(N-D)=1$, by Remark 3.2, we have a contradiction. $\hfill\square$

\begin{prop} If $D^2=4$, then $N\subset H$.\end{prop}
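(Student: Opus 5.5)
Following the pattern of Propositions 5.2--5.5, I will use Lemma 5.1 (iii) and reduce to showing $N.H\leq 4$. So assume $N.H\geq5$; then $s\geq2$ by Lemma 5.1 (iv). In case (d) of Proposition 2.1 we have $D^2=4$, $H.D=6$, and $|D-H|=|2H-D|=\emptyset$. The inequality (5.5) becomes
$$s(N.H-4)\leq 6-D.N+N^2-1=5+N^2-D.N.$$
I will split by $N^2\in\{0,2,4\}$ and $N^2\geq6$. In each case the aim is a contradiction from one of four sources: Remark 3.2 (a nonzero effective class $L$ with $h^0\geq2$ and $L.H\leq2$), the Hodge index theorem, the non-emptiness of $|D-H|$ or $|2H-D|$, or the inequality $\rho(g,1,d_2)>0$ once $d_2$ is pinned down.

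For $N^2=0$, (5.5) gives $s(N.H-4)\leq 5-D.N$. Hence $D.N\leq3$, and $(N-D)^2=4-2D.N\geq-2$. If $H.(N-D)>0$, then $N-D$ is effective. This is impossible when $|N|$ is an elliptic pencil (Lemma 5.1 (i)) and $h^0(D)=4$. Otherwise $D-N$ is effective with small degree, and I expect to compare it against $|D-H|=\emptyset$. For $N^2=2$, I bound $D.N$ by the Hodge index theorem on $N,D$, namely $(D.N)^2\geq8$, so $D.N\geq3$. When $(N-D)^2\geq-2$ I derive $h^0(N-D)>0$ or $h^0(D-N)>0$. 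In the first subcase, $h^0(N)=3<h^0(D)$ gives a contradiction. In the second subcase, $D-N$ is a curve of degree $6-N.H\leq1$, so it is a line or trivial. Then $M=sH+D-N$ is aCM or close to it, so $h^1(M)=0$, and Lemma 5.1 (v) forces $A=|N|_C|$ with $d_2=N.C$. This should then violate $d_2\leq d-1=4s+5$ or the sign of $\rho$. For $N^2=4$, Lemma 5.1 (vi) gives $N.H\geq6$, and Hodge gives $D.N\geq4$. Then (5.5) gives $2s\leq 5-D.N+4\leq5$, so $s=2$ and $D.N=4$. In that case $(N-D)^2=0$ and $H.(N-D)=0$, so $N\sim D$. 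But $N.H\geq6=D.H$ would then force $N\sim D$, i.e. $M\sim 2H$, and I expect the computation $d_2=M.N$ to give $\rho>0$ or $h^1(E)=0$, contradicting Proposition 4.1 (ii).

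For $N^2\geq6$, Lemma 5.2 with $H.D=6$, $D^2=4$ gives $3/2\leq s\leq(16+60-16)/16$, so $2\leq s\leq3$. Lemma 5.1 (ii) gives $H.(M-N)=4s+6-2N.H\geq0$. Hence $N.H\leq 2s+3$, and Remark 5.1 gives $N.H\geq6$. For $s=2$: $N.H\in\{6,7\}$. The case $H.(M-N)=0$ gives $C\sim2N$, and parity of $C^2=16+24+4=44$ versus $4N^2$ must be checked: this gives $N^2=11$, impossible. The case $H.(M-N)=2$ gives $N.H=N^2=6$, and I then use (5.8) to get $(H+D-N)^2\geq0$ with $H.(H+D-N)=4$, leading to $N\sim H+D-\text{(something)}$ or to the classes $M\sim2H$, $N\sim D$ type configuration, closed off via $h^1(E)=0$. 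For $s=3$: $N.H\leq7$. Lemma 5.1 (vii) gives $(M-N)^2\geq36+2\cdot(-2)\cdot3+4-24+4=8$, so Hodge requires $H.(M-N)\geq 6$. This forces $N.H\leq6$, hence $N^2=6$, and then (5.8) with Remark 3.2 or Hodge on $N-D$ finishes.

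The main obstacle I expect is the boundary subcases where $D.N$ is exactly at the Hodge threshold and $N-D$ or $H+D-N$ has square $0$ or $-2$. There I must use the defining condition $|D-H|=|2H-D|=\emptyset$ rather than mere numerics, since Proposition 2.1 (d) is the only place that non-numerical condition enters. I will treat those by first proving effectivity via Riemann--Roch, then showing the resulting decomposition produces a member of $|D-H|$ or $|2H-D|$.
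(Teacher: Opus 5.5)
Your skeleton (reduce via Lemma 5.1 (iii), split on $N^2$, then use (5.5), Hodge and Lemma 5.2) is the paper's. The cases $N^2=2$ with $D.N\le4$ and $N^2\ge6$ with $s=3$ go through as you say. However, several subcases are dropped or closed by an argument that does not work.

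For $N^2=4$, (5.5) reads $s(N.H-4)\le 9-D.N$. With $N.H\ge6$ and $D.N\ge4$ this gives $s=2$, $N.H=6$ and $D.N\in\{4,5\}$, not $D.N=4$, so you lose the subcase $D.N=5$. For $N^2=2$ you only treat $(N-D)^2\ge-2$, i.e. $D.N\le4$; the subcase $D.N\ge5$ is never handled. For $N^2=0$ you correctly reach the situation where $D-N$ is a line, but comparing with $|D-H|=\emptyset$ cannot finish the case. Indeed $(D-H)^2=(2H-D)^2=-4$, so nothing forces either class to be effective. For $N^2=N.H=6$, $s=2$, the class $H+D-N$ is just $M-H$, and its effectivity contradicts nothing. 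Nor can you land in an $N\sim D$ configuration there, since $N^2\neq D^2$.

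The missing idea is this. For $s=2$ you have $g=23$ and $d=14$, so the hypothesis $\rho(g,1,d_2)<0$ means $d_2\le12$. On the other hand, equality in (5.5) means $M.N=d_2=d-1=13$, and $\rho(23,1,13)=1>0$. This kills every leftover case:
\begin{itemize}
\item For $N^2=0$, $(D-N)^2=-2$ gives $D.N=3$, and (5.5) forces $s=2$, $N.H=5$, $M.N=13$. The paper instead notes $W=\emptyset$ and $M=2H+(D-N)$ aCM, hence $h^1(E)=0$, against Proposition 4.1 (ii).
\item For $N^2=2$, $D.N\ge5$ forces $s=2$, $N.H=5$, $D.N=5$, $M.N=13$.
\item For $N^2=4$, $D.N=5$ gives $M.N=N.C-4=13$.
\item For $N^2=N.H=6$, $s=2$, Hodge and (5.5) give $5\le D.N\le7$. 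If $D.N\le6$, then $(N-D)^2\ge-2$ with $H.(N-D)=0$, which is impossible for the nontrivial class $N-D$. If $D.N=7$, then $M.N=13$.
\end{itemize}

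Contrary to your last paragraph, condition (d) is needed only in the subcase $N\sim D$, $M\sim 2H$ of $N^2=4$. There $\rho$ does not help if $d_2=12$. But Lemma 5.1 (ii) gives $h^0(2H-D)=h^0(M-N)>0$, which contradicts $|2H-D|=\emptyset$ directly. The paper phrases this as $M-N$ being aCM with $(M-N)^2=-4$, and your $h^1(E)=0$ alternative also works there.
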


{\textit{Proof}}. By Lemma 5.1 (iii), we show that $N.H\leq4$. Assume that $N.H\geq5$. 

We consider the case where $N^2=0$. Since $H.D=6$ and $s\geq2$, by the inequality (5.5), $D.N\leq3$. Then we have $(D-N)^2=4-2D.N\geq-2$. Since $h^0(\mathcal{O}_X(D))=4$ and, by Lemma 5.1 (i), $|N|$ is a pencil, $h^0(D-N)>0$. Since $D^2\neq0$ and $H.(D-N)\leq1$, by the ampleness of $H$, we have $H.(D-N)=1$. Hence, the member of $|D-N|$ is a line on $X$. Thus, $N.D=3$. By using the inequality (5.5) again, we obtain $s=2$, $N.H=5$, and $d_2=M.N$. By Proposition 2.1, $D-N$ is aCM, and hence, $M$ is also aCM. Since the length of $W$ is zero, and $h^1(M)=h^1(N)=0$, by the exact sequence (5.2), we have $h^1(E_{C,(A,V)})=0$. This contradicts the assertion of Proposition 4.1 (ii).

We consider the case where $N^2=2$. By the Hodge index theorem, $D.N\geq3$. Since $s\geq2$, if $N.H\geq6$, then, by the inequality (5.5), $s=2$, $N.H=6$, and $D.N=3$. Then $(N-D)^2=0$ and $H.(N-D)=0$. Since $H$ is ample, $N\sim D$. However, since $D^2=4$, this case does not occur. Thus, $N.H=5$. Then we obtain $D.N\geq5$. In fact, if $D.N\leq4$, we have $(D-N)^2=6-2N.D\geq-2$ and $H.(D-N)=1$. This means that the member of $|D-N|$ is a line on $X$, and hence, $M$ is aCM and $N.D=4$. Since $h^1(M)=0$, by Lemma 5.1 (v), we get $A=|N|_C|$. Hence, $5s+4=N.(sH+D)=N.C=d_2\leq d-1=4s+5$. This contradicts the fact that $s\geq2$. By the inequality (5.5), $s=2$, $D.N=5$, and $d_2=M.N$. This implies the contradiction $\rho(g,1,d_2)=2N.C-g-6=1>0$. 

We consider the case where $N^2=4$. By the Hodge index theorem, $D.N\geq4$. By Lemma 5.1 (vi) and the inequality (5.5), $N.H=6$, and hence, $H.(D-N)=0$. If $D.N=4$, then $(D-N)^2=0$. By the ampleness of $H$, we have $N\sim D$. Since $M\sim sH$, we have $6s=M.N\leq 4s+D.H-1=4s+5$, and hence, $s=2$. We get $M-N\sim 2H-D$. Since $\mathcal{O}_X(D)$ is aCM, $M-N$ is also aCM. By Lemma 5.1 (ii), $h^0(M-N)>0$. However, since $(M-N)^2=-4$, we have the contradiction $h^1(M-N)\neq0$. Thus, $D.N\geq5$. By the inequality (5.5), $s=2$, $D.N=5$, and $d_2=M.N=N.C-4$. We obtain the contradiction $\rho(g,1,d_2)=2N.C-g-10=1>0$.

We consider the case where $N^2\geq6$. By Lemma 5.2, $s\leq3$. If $s=3$, by Lemma 5.1 (vii), $(M-N)^2\geq8$. By the Hodge index theorem,
$$18-2N.H=H.(M-N)\geq6.$$
Hence, $N.H\leq6$. By Remark 5.1, we have $N.H=N^2=6$. By using the Hodge index theorem again, $D.N\geq5$. By the inequality (5.5), $D.N=5$. Therefore, $(N-D)^2=0$ and $H.(N-D)=0$. By the ampleness of $H$, we have the contradiction $N\sim D$. Thus, $s=2$. Since $d=H.C=14$, by Lemma 5.1 (ii) and Remark 5.1, $6\leq N.H\leq7$. If $N.H=7$, by the ampleness of $H$, we have $C\sim 2N$. This means the contradiction $N^2=11$. Hence, $N.H=N^2=6$. By the inequality (5.5) and the Hodge index theorem, $5\leq D.N\leq 7$. If $D.N\leq6$, then $(N-D)^2\geq-2$ and $H.(N-D)=0$. Hence, we obtain the contradiction $N\sim D$. If $D.N=7$, we get $d_2=M.N=N.C-6$. Therefore, we have the contradiction $\rho(g,1,d_2)=2N.C-g-14=1>0$. $\hfill\square$

$\;$

\noindent Finally, we show that the hypothesis $g>2d_2-2$ as in Theorem 1.1 cannot be avoided, by giving the following example.

$\;$

\noindent{\textbf{Example 5.1}}. By [7, Theorem 4.1], there exist a smooth quartic surface $X$ in $\mathbb{P}^3$, and a smooth rational curve $D$ of degree 3 on $X$ such that $\Pic(X)$ is generated by the classes of $D$ and a hyperplane section $H$ of $X$. Since $(2H-D)^2=2$ and $H.(2H-D)=5$, by the proof of Proposition 2.1, the general member of $|2H-D|$ is a smooth aCM curve of genus 2. We take a smooth irreducible curve $C\in|3H-D|$. Then the genus $g$ of $C$ is 9, $|\mathcal{O}_C(2H-D)|=g_7^2$, and $\gamma_C=3$. Indeed, $\gamma_C\leq\gamma(\mathcal{O}_C(2H-D))=3<4=\lfloor (g-1)/2\rfloor$. There exists a line bundle $L\in\Pic(X)$ such that $h^0(L)\geq2$, $h^0(L^{\vee}(C))\geq2$, and $\gamma(L|_C)=\gamma_C$ ([5, Theorem]). We set $L\sim sH+tD$, for $s,t\in\mathbb{Z}$. Since $h^0(\mathcal{O}_X(mD))\leq1$ for any $m\in \mathbb{Z}$, we have $1\leq s\leq2$. Since $h^1(L)=h^1(L^{\vee}(C))=0$ ([6, Proposition 2.6]), we obtain $L^2\geq0$ and $(C-L)^2\geq0$, by the Riemann-Roch theorem. By easy computation, we get $(s,t)=(1,0)$, or $(2,-1)$. Hence, $\gamma_C=\gamma(\mathcal{O}_C(2H-D))=3$.

$\;$

\noindent{\textbf{Remark 5.2}}. In Example 5.1, $C$ is an aCM curve of genus $9$ and degree $9$. Since the line bundle $\mathcal{O}_C(2H-D)$ computes $\gamma_C$, the second gonality $d_2$ of $C$ is 7. However, in this case, $\rho(9,1,7)=3>0$. 

$\;$


\smallskip

\smallskip


\end{document}